\newcommand{\arxiv}[2][]{\ifthenelse{\equal{#1}{}}
{\href{http://arxiv.org/abs/#2}{\tt arXiv:#2}}
{\href{http://arxiv.org/abs/math/#2}{\tt arXiv:math.#1/#2}}}
\theoremstyle{plain}
\newtheorem{theorem}{Теорема}[section]
\newtheorem{lemma}[theorem]{Лемма}
\newtheorem{corollary}[theorem]{Следствие}
\theoremstyle{definition}
\newtheorem{definition}[theorem]{Определение}
\newtheoremstyle{remark}
{}{}{}{}{\itshape}{}{ }{\thmname{#1}\thmnumber{ \itshape #2.}}
\theoremstyle{remark}
\newtheorem{remark}[theorem]{Замечание}
\newtheorem{example}[theorem]{Пример}
\def\phi{\varphi}
\def\R{\Bbb{R}}
\def\@settitle{\begin{center}%
    \baselineskip14\p@\relax
    \bfseries
    \@title
  \end{center}%
}
\begin{document}


\title[]{О ЧИСЛЕ ГРАНЕЙ МНОГОГРАННИКОВ ГЕЛЬФАНДА--ЦЕТЛИНА}
\author{Е. В. Мелихова}

\begin{abstract}
Работа посвящена изучению комбинаторики многогранников Гель\-фанда--Цетлина. С использованием  геометрических свойств линейной проекции многогранника Гельфанда--Цетлина на некоторый куб выведено рекуррентное соотношение на $f$-многочлен многогранника Гельфанда--Цетлина. Для многогранников Гельфанда--Цетлина нескольких простейших типов, образующих однопараметрические семейства, с помощью найденного рекуррентного соотношения явно выписаны $f$-многочлен и $h$-многочлен.
 \end{abstract}
 
\address{Ekaterina V. Melikhova,
National Research University ``Higher School of Economics'',
Faculty of Mathematics,
Usacheva St., 6, Moscow, 119048, Russia}
\email{ekmelikhova86@gmail.com}

\maketitle

\section{Введение}
Пусть $\lambda_1 \le \dots \le \lambda_s$ --- неубывающая конечная последовательность действительных чисел. Рассмотрим треугольную таблицу 

\begin{equation}\label{tab}
\setcounter{MaxMatrixCols}{11}
\begin{matrix}
	\lambda_1&     &     \lambda_2&     &     \lambda_3&    &    \lambda_4&   &   \dots&     &      \lambda_s\\
	   &         u_{1,1}&    &        u_{1,2}&    &      u_{1,3}&   &       \dots&  &     u_{1,s-1}\\
	   &           &     u_{2,1}&       &      u_{2,2}&     &     \dots&       &   u_{2,s-2}\\
	   &           &         &        \ddots&  \ddots&   \vdots&  \iddots&       \iddots\\
	   &           &         &          &      u_{s-2,1}&   &   u_{s-2,2}\\
	   &           &         &          &         &      u_{s-1,1}\\
\end{matrix}.
\end{equation}

 \medskip
 
Будем считать, что каждые три символа $a,\ b,\ c$, которые в таблице стоят в вершинах треугольника
\[
\begin{matrix}
a& & c\\
&  b
\end{matrix},
\]
 связаны двойным неравенством: $a\le b \le c$. Тогда наша таблица даёт конкретный набор линейных неравенств, зависящих от $\lambda_i$, который определяет некоторый выпуклый многогранник в $\R^{\frac{s(s-1)}{2}}$. Заметим, что координаты $u_{i,j}$ в нашем случае занумерованы упорядоченной парой целых чисел $(i,\,j)$, где $i$ пробегает значения от $1$ до $s-1$, а $j$ пробегает значения от $1$ до $s-i$.
Этот выпуклый многогранник и называется \textit{многогранником Гельфанда--Цетлина, соответствующим последовательности}
 $\lambda_1 \le \dots \le \lambda_s$. Обозначим его через $GZ(\lambda_1 \dots \lambda_s)$. 
 \begin{remark}
    Если в последовательности $\lambda_1 \le \dots \le \lambda_s$ $k$ различных чисел с кратностями $i_1,\, i_2,\, \dots i_k$ соответственно, то $\dim GZ(\lambda_1 \dots \lambda_s)=\frac{1}{2}(s^2-\sum_{j=1}^{k}{i_j}^2)$.
 \end{remark}  
\begin{example}\label{simplexGZ}
  Рассмотрим многогранник $GZ(0\, 1^l)$. (Здесь и далее мы используем мультипликативную запись для последовательности $\lambda_1 \le \dots \le \lambda_s $ с повторениями: показатель равен количеству копий основания. В частности, $GZ(0 \,1^l)$ -- это многогранник Гельфанда--Цетлина, отвечающий последовательности из одного нуля и $l$ единиц.) Соответствующая треугольная таблица равносильна цепочке нестрогих неравенств $0 \le u_{1,1} \le u_{2,1} \le \dots \le u_{l,1} \le 1$ и, следовательно,  $GZ(0\, 1^l)$ является $l$-мерным симплексом. (Для многогранника $GZ(0^l\, 1)$ соответствующая треугольная таблица равносильна цепочке $0 \le u_{l,1} \le u_{l-1,2} \le \dots \le u_{1,l} \le 1$ и, следовательно,  $GZ(0^l\, 1)$ также является $l$-мерным симплексом.) 
\end{example}   
Многогранники Гельфанда--Цетлина берут своё начало в теории представлений группы $GL(n)$
(см.\ \cite{Sm16}*{\S5.2} и \cite{GZ}) и тесно связаны с исчислением Шуберта и топологией
многообразий полных флагов (см.\ \cite{Sm16} и \cite{KST}).
Так, кольцо когомологий многообразия полных флагов $Fl(n)$ изоморфно кольцу
Хованского--Пухликова многогранника Гельфанда--Цетлина $GZ(\lambda_1\dots\lambda_n)$,
где $\lambda_1<\dots<\lambda_n$ (К. Каве; см.\ \cite{Sm16}*{\S5.1}), причём умножение
в этом кольце описывается в терминах пересечения граней многогранника $GZ(\lambda_1\dots\lambda_n)$ (см.\ \cite{KST} и \cite{Sm16}*{\S5.4}).
Таким образом, комбинаторика многогранников Гельфанда--Цетлина представляет определённый интерес.

П. Гусев, В. Кириченко и В. Тиморин нашли, кроме прочего, рекуррентное соотношение на числа вершин многогранников $GZ(\lambda_1\dots\lambda_n)$,
где $\lambda_1\le\dots\le\lambda_n$ \cite{GKT}.
 
В настоящей работе получено, в частности, рекуррентное соотношение на $f$-векторы этих многогранников (определение $f$- вектора см. ниже).

Напомним, что \textit{решёткой граней} $L(P)$ многогранника $P$ называют множество его граней, упорядоченное по включению, и что два многогранника $P$ и $P^{\prime}$ называют \textit{комбинаторно эквивалентными}, если их решётки граней изоморфны, то есть существует биекция $L(P)$ на $L(P^{\prime})$, сохраняющая отношение включения. Если многогранник $P$ комбинаторно эквивалентен многограннику $P^{\prime}$, мы будем писать $P\simeq P^{\prime}$. Одной из важнейших характеристик класса комбинаторно эквивалентных многогранников является конечная последовательность $(f_0, f_1,\dots, f_n)$, где $n$ -- размерность многогранника, $f_i$ -- число граней размерности $i$. Её называют $f$-\textit{вектором}. Соответствующую производящую  функцию $f(t)=f_0+f_1\cdot t+\dots+ f_n \cdot t^n$ называют $f$-\textit{многочленом}. В некоторых случаях нам будет удобнее рассуждать на языке $h$-\textit{многочлена}. Последний восстанавливается по $f$-многочлену однозначно, а именно: $h(s):=f(s-1)=\sum_{i=0}^{n}h_is^i$, $h_i=\sum_{k= i}^{n} f_k (-1)^{k-i}\binom ki $. Конечную последовательность $(h_0, h_1,\dots, h_n)$ называют $h$-\textit{вектором}. 
 
Легко видеть, что (с точностью до комбинаторной эквивалентности) существует всего два трёхмерных многогранника Гельфанда--Цетлина: $GZ(1\, 2\, 3)$ и симплекс $GZ(0\, 1^3)$. Поэтому, если нам понадобится геометрическая иллюстрация, мы будем обращаться к многограннику $GZ(1\, 2\, 3)$, см. рисунок \ref{howtogetGZ(123)}. 
 \begin{figure}[h]
  \begin{minipage}[h]{0.49\linewidth}
   \[\begin{matrix}
   1 && 2 && 3\\
   & u_{1,1} && u_{1,2}\\
   && u_{2,1}\\
   \end{matrix}
    \Leftrightarrow
    \begin{cases}
   1\le u_{1,1}\le 2,\\
   2\le u_{1,2}\le 3,\\
   u_{1,1}\le u_{2,1}\le u_{1,2};
   \end{cases}
   \]	
  \end{minipage}
 \hfill
 \hfill
 \begin{minipage}[h]{0.49\linewidth}
   \center{\includegraphics[width=0.9\linewidth]{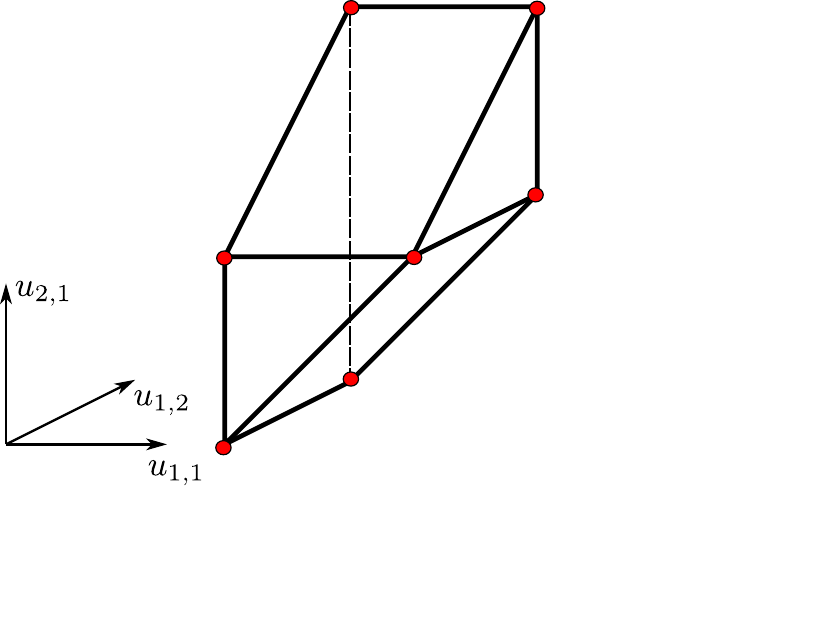}}
 \end{minipage}
 \caption{Многогранник $GZ(1\, 2\, 3)$, определяющая его треугольная таблица и соответствующая ей система неравенств. }
   \label{howtogetGZ(123)}
 \end{figure}
 
Скажем более подробно о полученных результатах. В разделе \ref{2} мы выводим упомянутое выше рекуррентное соотношение, пользуясь геометрическими свойствами линейной проекции на некоторый куб, впервые построенной в работе \cite{GKT}. Ключевой момент состоит в том, что эта проекция настолько хороша, что каждая грань многогранника Гельфанда--Цетлина отображается на некоторую грань куба, и для каждой грани куба полный прообраз любой её внутренней точки комбинаторно эквивалентен некоторому многограннику Гельфанда--Цетлина.  Полученное рекуррентное соотношение \ref{recurrence relation} выражает $f$-многочлен исходного многогранника Гельфанда--Цетлина через $f$-многочлены многогранников, комбинаторно эквивалентных полным прообразам барицентров граней куба при рассматриваемой проекции.   Геометрическая часть доказательства теоремы \ref{recurrence relation} заключена в леммах \ref{lem1}, \ref{main lemma} и следствии \ref{main corollary}. 
 
В разделе \ref{Calculate} мы применяем найденное рекуррентное соотношение для исследования многогранников Гельфанда--Цетлина, соответствующих неубывающим последовательностям $(1 2^k 3)$, $(1 2 3^k)$ и $(2 2 3^k)$.
Пункт \ref{subsecGZ123k}  посвящён многогранникам $GZ(12^k3)$. Он доказывает, что, говоря неформально, $h_{GZ(12^k3)}$ имеет вид несимметричной горки: его компоненты сначала возрастают, а затем убывают, причём если исключить из рассмотрения компоненту старшей размерности, то любые две соседние компоненты отличаются на единицу, см. теорему \ref{hvector12k3}. В пункте \ref{subsecGZ123k223k} мы изучаем многогранники $GZ(123^k)$ и $GZ(223^k)$. Результатом является теорема \ref{hpolynom123k223k}, которая даёт достаточно компактные формулы для $h$-многочленов многогранников $GZ(123^k)$ и $GZ(223^k)$; а также следствие \ref{gfh123k223k}, которое по этим формулам находит производящие функции последовательностей $\{h_{GZ(123^k)}\}$ и $\{h_{GZ(223^k)}\}$.
\begin{remark}
Рекуррентное соотношение на $f$-многочлен многогранников Гель\-фанда--Цетлина получено также в недавней работе Ан, Чо и Кима \cite{BYJ}, о чём я узнала уже после того, как записала набросок доказательства теоремы \ref{recurrence relation}. 

Подход авторов работы \cite{BYJ} заметно отличается от используемого в данной статье. Авторы работы \cite{BYJ} дают описание решётки граней многогранника Гельфанда--Цетлина в терминах некоторого ориентированного плоского графа, называемого \textit{ступенчатой диаграммой} (\textit{ladder diagram}). Каждому классу комбинаторно эквивалентных многогранников Гельфанда--Цетлина они ставят в соответствие некоторую ступенчатую диаграмму и доказывают, что их решётки граней изоморфны. (<<Гранями>> ступенчатой диаграммы авторы называют некоторые её подграфы специального вида.) Пользуясь только комбинаторными свойствами ступенчатых диаграмм, авторы выводят рекуррентное соотношение, которое описывает $f$-многочлен ступенчатой диаграммы через $f$-многочлены ступенчатых диаграмм меньших размерностей. 
\end{remark}
\section{Рекуррентное соотношение}\label{2}
Мы будем работать с классами комбинаторно эквивалентных многогранников Гельфанда--Цетлина, для удобства в каждом классе выберем для себя эталонного представителя. 
\begin{remark}\label{AlmostOb} Достаточно очевидно, что если между наборами чисел $\lambda_1 \le \dots \le \lambda_s$ и $\lambda_1^{\prime} \le \dots \le \lambda_s^{\prime}$ существует биекция, сохраняющая отношения $=$ и $<$, тогда соответствующие многогранники Гельфанда--Цетлина комбинаторно эквивалентны. В частности,  если $\lambda_1 < \lambda_2 < \dots < \lambda_k$ , то  $GZ(\lambda_1^{i_1} \lambda_2^{i_2} \dots \lambda_k^{i_k} )\sim GZ( 1^{i_1}2^{i_2}\dots k^{i_k})$. 
\end{remark}

\begin{remark}
Менее очевидно, что если между наборами чисел $\lambda_1 \le \dots \le \lambda_s$ и $\lambda_1^{\prime} \le \dots \le \lambda_s^{\prime}$ существует биекция, сохраняющая отношение $=$, но обращающая строгий порядок, то соответствующие многогранники Гельфанда--Цетлина также будут комбинаторно эквивалентны. См. лемму \ref{onelemma}.
\end{remark}

В дальнейших рассуждениях нам будет удобно в качестве эталонного представителя своего комбинаторного класса использовать многогранник  $GZ(1^{i_1}\,2^{i_2}\,\dots \,k^{i_k})$. Cледуя статье \cite{GKT} , рассмотрим линейную проекцию $\pi$ многогранника $GZ ( 1^{i_1}\,2^{i_2}\,\dots \,k^{i_k})$ на куб $C$, заданный в координатах неравенствами

\begin{equation}\label{cube}
1 \le u_1 \le 2 \le u_2 \le 3  \dots \le {k-1} \le u_{k-1} \le k.
\end{equation}

Образно говоря, проекция $\pi$ состоит в запоминании у точки   $w\in GZ ( 1^{i_1}\,2^{i_2}\,\dots \,k^{i_k})$ только координат, стоящих во второй строке таблицы между разными числами первой строки, и забывании всех остальных.

\line(1,0){430}
\begin{equation}
\setcounter{MaxMatrixCols}{23}
\begin{matrix}
1&& \dots&&  1&& 2&& \dots&& {k-1}&& k&& \dots&& k\\
& 1& \dots& 1&& u_{1,i_1}&& 2& \dots& {k-1}&& u_{1,i_1+\dots i_{k-1}}&& k& \dots& k\\
&&&&& \|&&&&&& \|\\
&&&&& u_1&&&&&& u_{k-1}
\end{matrix}
\end{equation}

\line(1,0){430}
\begin{figure}[h]
\center{\includegraphics[width=0.45\linewidth]{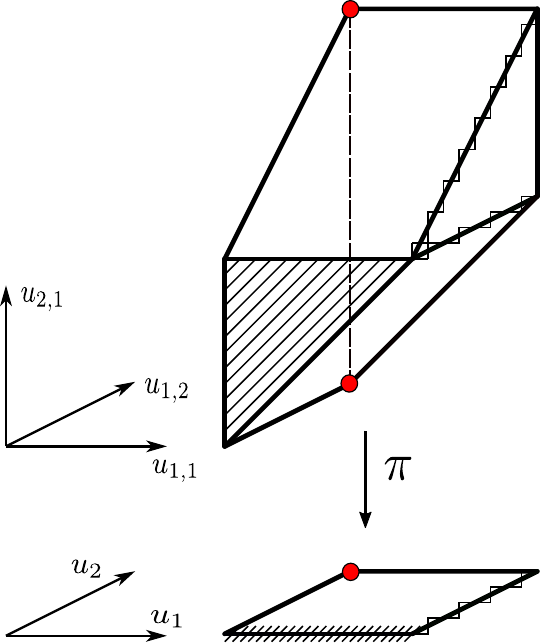}}
\caption{Проекция $\pi$ многогранника $GZ(1\, 2\, 3)$ на квадрат $C$. На рисунке выделены все вершины многогранника $GZ(1\, 2\, 3)$, которые отображаются на выделенную вершину квадрата; зигзагообразной линией выделены все рёбра, которые отображаются на ребро квадрата, помеченное зигзагообразной линией; штриховкой выделены все двумерные грани, которые отображаются на ребро квадрата, помеченное штриховкой.}
\label{projection}
\end{figure}

Сформулируем и докажем простую лемму.
\begin{lemma}\label{lem1}
Пусть $G$ -- непустая грань многогранника $GZ ( 1^{i_1}\,2^{i_2}\,\dots \,k^{i_k}) $, тогда $\pi(G)$ -- непустая грань куба $C$.
\end{lemma}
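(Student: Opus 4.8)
The plan is to compute $\pi(G)$ explicitly and recognise it as a face of the cube $C$. Write $P=GZ(1^{i_1}2^{i_2}\dots k^{i_k})$ and $p_l=i_1+\dots+i_l$, so that $\pi$ sends $w$ to $(u_1,\dots,u_{k-1})$ with $u_l=u_{1,p_l}(w)$, and the entry $u_{1,p_l}$ of the table sits directly under $\lambda_{p_l}=l$ and $\lambda_{p_l+1}=l+1$, whence $l\le u_{1,p_l}\le l+1$ on all of $P$. Since $C$ is literally the product of the segments $[1,2],[2,3],\dots,[k-1,k]$, its faces are exactly the subsets obtained by freezing some of the coordinates $u_1,\dots,u_{k-1}$ at an endpoint. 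A non-empty face $G$ is cut out of $P$ by turning a set $E$ of the table inequalities into equalities; I take $E$ to be the set of \emph{all} table inequalities that are tight (hold with equality) on $G$, so that $G=\{w\in P:\ w_a=w_b\text{ for every }(a,b)\in E\}$ and the equivalence relation generated by $E$ partitions the entries of the table (the $\lambda_i$ included) into ``blocks'' of coordinates forced to be equal on $G$.

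First I would dispose of the easy inclusion. For $w\in G$ the $l$-th coordinate of $\pi(w)$ is $u_{1,p_l}(w)\in[l,l+1]$, and — because $E$ collects \emph{all} tight inequalities — it is forced to the value $l$ (resp.\ $l+1$) precisely when the block of $u_{1,p_l}$ contains $\lambda_{p_l}$ (resp.\ $\lambda_{p_l+1}$); the two situations exclude one another. Let $A$, $B$ be those two sets of indices and set
\[
C_{A,B}=\{\,u\in C:\ u_l=l\ (l\in A),\ u_l=l+1\ (l\in B)\,\},
\]
a non-empty face of $C$ (note $A\cap B=\varnothing$). Then $\pi(G)\subseteq C_{A,B}$ is immediate.

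The substance is the opposite inclusion $C_{A,B}\subseteq\pi(G)$: given $u\in C_{A,B}$ I construct $w\in G$ with $\pi(w)=u$ by assigning a value to each block. A block meeting the top row is given the common value of the $\lambda_i$ it contains; the block of $u_{1,p_l}$ with $l\notin A\cup B$ (such a block meets neither the top row nor any other $\lambda_m$, so it is not covered by the first rule) is given the value $u_l$; and each remaining block is given the largest of the values already attached to the blocks of these first two kinds that precede it in the partial order on blocks induced by the table inequalities — such a predecessor always exists, for instance a suitable $\lambda_m$. Granting that (i) the values prescribed on these ``distinguished'' blocks are compatible with the partial order and (ii) the ``largest predecessor'' rule then produces a genuinely order-preserving assignment, the resulting array $w$ satisfies every table inequality (so $w\in P$) and is constant on each block (so it satisfies $E$, i.e.\ $w\in G$), while $\pi(w)=u$ by construction. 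Thus $\pi(G)=C_{A,B}$, a non-empty face of $C$.

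Property (ii) is a routine fact about finite posets, so the heart of the matter is (i). When one of the two blocks meets the top row, (i) follows by evaluating, at any point of $G$, the chain of (tight-on-$G$) table inequalities that witnesses the precedence, using the fact that $u_{1,p_l}$ with $l\notin A\cup B$ is non-constant on $G$. The delicate case is a precedence between the blocks of two distinct coordinates $u_{1,p_{l'}}$ and $u_{1,p_l}$: since $u_{1,p_l}\le u_{1,p_{l'}}$ holds throughout $P$ whenever $l\le l'$ (via the chain $u_{1,p_l}\le\lambda_{p_l+1}\le u_{1,p_l+1}\le\dots\le\lambda_{p_{l'}}\le u_{1,p_{l'}}$), such a precedence with $l'>l$ would force this whole chain — $\lambda$-entries included — to collapse to equalities on $G$, contradicting that $l$ is free; while for $l'<l$ the required inequality $u_{l'}\le u_l$ is automatic from $l'+1\le l$. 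I expect assembling this case analysis — and, in particular, seeing clearly why it is membership in $C_{A,B}$, and not in some smaller polytope, that makes the lifting succeed — to be the main obstacle; the rest is bookkeeping, and the argument specialises at $E=\varnothing$ to a proof of the surjectivity $\pi(P)=C$ from \cite{GKT}.
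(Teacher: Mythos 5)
Your argument is correct, and at the level of strategy it coincides with the paper's: both identify $\pi(G)$ with the face of $C$ determined by which of the inequalities $l\le u_{1,p_l}\le l+1$ are forced to equalities on $G$; the inclusion $\pi(G)\subseteq C_{A,B}$ is trivial and the substance is that every point of $C_{A,B}$ lifts into $G$. The difference is in execution: the paper's proof is a few lines and simply asserts the lifting step as clear, while you actually carry it out via the block decomposition and the largest-distinguished-predecessor rule. Moreover, your insistence on saturating $E$ (taking \emph{all} inequalities tight on $G$, so that $A$ and $B$ also record indirectly forced equalities) is genuinely needed and is glossed over in the paper: for $GZ(1\,2\,3)$ and the vertex $G$ cut out by $u_{1,1}=u_{2,1}=u_{1,2}$, no equality between the first two rows is imposed directly, yet $\pi(G)$ is the single vertex $(2,2)$ of the square. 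One soft spot: in the ``easy'' case of (i) you invoke the fact that $u_{1,p_l}$ is non-constant on $G$ when $l\notin A\cup B$. This is true (perturb the common value of a block containing no top-row entry at a relative-interior point of $G$), but it is not established at that stage of your argument, and quoting it there looks circular, since it is itself a consequence of the construction you are in the middle of justifying. It is cleaner to argue exactly as in your delicate case: if a top-row block of value $m$ precedes the block of $u_{1,p_l}$ and $m\ge l+1$, then $u_{1,p_l}$ is squeezed to the constant $l+1$ on $G$, so the table inequality $u_{1,p_l}\le\lambda_{p_l+1}$ is tight and $l\in B$, a contradiction; dually for a top-row successor of value $m\le l'$. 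With that repair, and the routine order-theoretic verifications you defer, the proof is complete.
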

\begin{proof}
Нам уже известно, что образом всего многогранника  является весь куб, поэтому будем считать, что рассматриваемая грань -- собственная. Так как  $G$ -- непустая собственная грань многогранника $GZ( 1^{i_1}\,2^{i_2}\,\dots\,k^{i_k})$, то система определяющих её линейных неравенств и равенств получается из соответствующей треугольной таблицы заменой некоторых знаков неравенства на знаки равенства. При этом проекция $\pi$ забывает равенства и неравенства, находящиеся ниже второй строки таблицы, но оставляет неизменными неравенства и равенства, связывающие вторую и первую строки. Последние не могут носить противоречивый характер, так как иначе грань $G$ оказалась бы пустой. А значит они определяют некоторую грань куба $C$. Ясно также, что над любой точкой из полученной таким образом грани куба висит точка исходной грани $G$, то есть $\pi(G)$ -- непустая грань куба $C$.
\end{proof}

\begin{example}
Рассмотрим проекцию $\pi$ многогранника $GZ(1\, 2\, 3\,)$ на квадрат $C$, заданный в координатах неравенствами $1 \le u_1 \le 2 \le u_2 \le 3. $
 
Выпишем все двумерные грани многогранника $GZ(1\, 2\, 3\,)$, которые отображаются на  ребро квадрата $C$, заданное в координатах: $1\le u_1 \le 2,\ u_2=2$. Она одна (см. рис. \ref{projection}): $1\le u_{1,1} \le u_{2,1}\le 2,\ u_{1,2}=2$.

Выпишем все рёбра многогранника $GZ(1\, 2\, 3\,)$, которые отображаются на ребро квадрата $C$, заданное в координатах: $u_1=2,\  2\le u_2\le 3$. Их два (см. рис. \ref{projection}): $u_{1,1}=2,\  2\le u_{2,1}=u_{1,2}\le 3$ и $u_{1,1}=2,\  u_{2,1}=2,\  2\le u_{1,2}\le 3$.

Выпишем все вершины многогранника $GZ(1\, 2\, 3\,)$, которые отображаются на вершину $(1,\, 3)$ квадрата $C$. Их две (см. рис. \ref{projection}): $(1,\, 3,\, 1)$ и $(1,\, 3,\, 3)$.
\end{example}

Теперь мы знаем, что при проекции $\pi$ над каждой гранью куба $C$ висит некоторое количество граней исходного многогранника Гельфанда--Цетлина. Идея состоит в том, чтобы собрать искомый $f$-многочлен многогранника $GZ(1^{i_1}\,2^{i_2}\,\dots\,k^{i_k})$ из кусочков, соответствующих граням куба $C$. Как мы это будем делать? 

Пусть $A$ -- некоторая грань куба $C$, обозначим через $\widehat A$  барицентр грани $A$. Заметим, что между множеством барицетров всех граней куба $C$ и множеством мономов многочлена $p:= \prod_{j=1}^{k-1}(x_j+x_{j+0.5}+x_{j+1} ) $ существует взаимнооднозначное соответствие. Поясним это соответствие, практически дословно повторив рассуждение из статьи \cite{GKT}. В самом деле, чтобы зафиксировать барицетр $\widehat A$  некоторой грани куба $C$ нужно для каждого $j$ от $1$ до $k-1$ указать, какое из равенств $u_j=j, \ u_j=j+0.5 \ $  или $ \  u_j=j+1$ имеет место. Аналогично, чтобы зафиксировать моном в многочлене $ p $ нужно для каждого $j$ от $1$ до $k-1$ указать, какое слагаемое из скобки $( x_j+x_{j+0.5}+x_{j+1})$ войдёт в моном.

\begin{figure}[h]
\begin{minipage}[h]{0.49\linewidth}
\center{\includegraphics[width=0.85\linewidth]{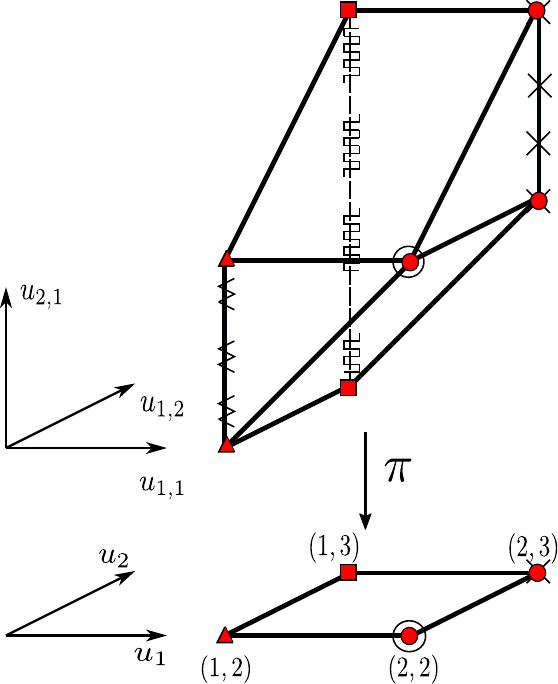} \\ а)}
\end{minipage}
\hfill
\begin{minipage}[h]{0.49\linewidth}
\center{\includegraphics[width=0.55\linewidth]{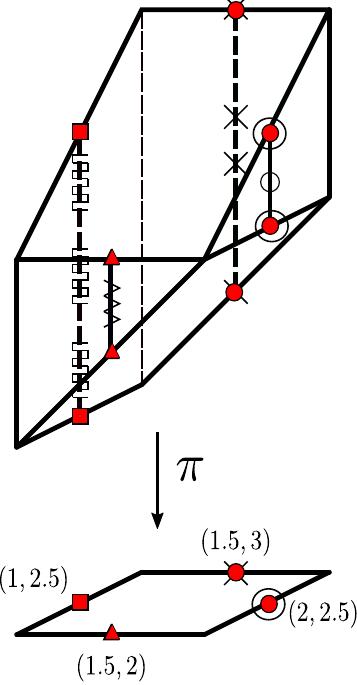} \\ б)}
\end{minipage}
\caption{Прообразы барицентров вершин и рёбер квадрата $C$ при проекции $\pi$ многогранника $GZ(1\,2\,3)$ на $C$.}
\label{pre1}
\end{figure}

\begin{example}\label{preimage}
Снова рассмотрим проекцию $\pi$ многогранника $GZ(1\, 2\, 3\,)$ на квадрат $C$, заданный в координатах неравенствами $1 \le u_1 \le 2 \le u_2 \le 3$. В этом случае $p=( x_1+x_{1.5}+x_2)(x_2+x_{2.5}+x_3)$. На рис. \ref{pre1} изображены полные прообразы барицентров вершин и рёбер квадрата $C$ при проекции $\pi$.
 
 Вершинам $C$ соответствуют мономы с целыми индексами $(1,2)\mapsto x_1 x_2$, $(1,3)\mapsto x_1 x_3$,  $(2,2) \mapsto x_2^2$, $(2,3) \mapsto x_2 x_3$, причём 
\[
\begin{matrix}
&& \xout1&&\xout2&&\xout3\\
\pi^{-1} (1,2)& \simeq&&1&&2&&=&GZ(1\,2),\\
&&&&u&\\
\end{matrix}
\] 
\[
\pi^{-1} (1,3)\simeq GZ(1\,3),\ \  \pi^{-1} (2,2) \simeq GZ(2^2),\ \  \pi^{-1} (2,3) \simeq GZ(2\,3).\ \ 
\] 

Рёбрам $C$ соответствуют мономы, в которых один из сомножителей имеет полуцелый индекс $(1.5,2)\mapsto x_{1.5} x_2$, $(1.5,3)\mapsto x_{1.5} x_3$, $(1,2.5) \mapsto x_1 x_{2.5}$, $(2,2.5) \mapsto x_2 x_{2.5}$, причём
\[
\begin{matrix}
&& \xout1&&\xout2&&\xout3\\
\pi^{-1} (1.5,2)& \simeq&&1.5&&2&&=&GZ(1.5\ 2),\\
&&&&u&\\
\end{matrix}
\]

\[
\pi^{-1} (1.5,3) \simeq GZ(1.5\ 3),\ \ \pi^{-1} (1,2.5) \simeq GZ(1\ 2.5),\ \ \pi^{-1} (2,2.5) \simeq GZ(2\ 2.5).\ \ 
\]

Единственной двумерной грани соответствует моном, в котором оба сомножителя имеют полуцелый индекс
 $(1.5,2.5)\mapsto x_{1.5} x_{2.5}$, причём
\[\begin{matrix}
&& \xout1&&\xout2&&\xout3\\
\pi^{-1} (1.5,2.5)& \simeq&&1.5&&2.5&&=&GZ(1.5\ 2.5).\\
&&&&u&\\
\end{matrix}\]
\end{example}
Теперь вернёмся к рассмотрению общего случая. Пусть $\widehat A$ -- барицентр, соответствующий моному

\begin{equation}\label{monom}
x_1^{\alpha_1} x_{1.5}^{\alpha_{1.5}}\cdots  x_{k-0.5}^{\alpha_{k-0.5}} x_k^{\alpha_k}.
\end{equation}

Тогда $\pi^{-1}( \widehat A )$ комбинаторно эквивалентен 

\begin{equation}\label{main row}
GZ( 1^{i_1-1+\alpha_1} \ 1.5^{\alpha_{1.5}}\ \dots  \ {\left(k-0.5\right)}^{\alpha_{k-0.5}} \ k^{i_k-1+\alpha_k}).
\end{equation}

Почему это так? Из примера \ref{preimage} становится ясно, что $\pi^{-1}( \widehat A )$ комбинаторно эквивалентен многограннику Гельфанда--Цетлина, первая строка которого может быть получена в два шага.
На первом шаге мы вносим в неё $i_q-1$ символов $q$ для каждого $q$ от $1$ до $k$ и $(k-1)$ пустой кружок так, как показано на схеме (\ref{second row}).

\begin{equation}\label{second row}
\underbrace{1\dots 1}_{i_1-1}\bigcirc \underbrace{2\dots 2}_{i_2-1} \bigcirc \dots \bigcirc \underbrace{k \dots k}_{i_k-1}
\end{equation}

Мультипликативный вариант картинки выглядит так

\[
1^{i_1-1}\bigcirc 2^{i_2-1} \bigcirc \dots \bigcirc k^{i_k-1}.
\]

А затем заполняем пустые кружочки координатами точки $\widehat A$, двигаясь слева направо. При этом показатель $\alpha_r$, где $r\in \{1,\, 1.5,\, 2,\dots,\,k\}$ в мономе (\ref{monom}) говорит нам, сколько чисел $r$ мы должны использовать.

Итак, теперь мы знаем, что прообразы барицентров граней куба $C$ являются многогранниками Гельфанда--Цетлина, более того для каждого прообраза мы знаем определяющую его неубывающую последовательность действительных чисел  $$ 1^{i_1-1+\alpha_1} \ 1.5^{\alpha_{1.5}}\ \dots  \ {(k-0.5)}^{\alpha_{k-0.5}} \ k^{i_k-1+\alpha_k}.$$ Покажем теперь, что проекция $\pi$ позволяет выписать рекуррентное соотношение, связывающее $f$-многочлен исходного многогранника Гельфанда--Цетлина с $f$-много\-членами прообразов барицентров граней куба $C$.

Пусть $G$ -- некоторая грань многогранника $ GZ( 1^{i_1} \dots k^{i_k}) $, обозначим через $A_G$ грань куба $C$, которая является образом грани $G$ при проекции $\pi$ (здесь мы пользуемся леммой \ref{lem1}), тогда $\widehat {A_G}$ -- барицентр грани $A_G$. Пересечение многогранника $\pi^{-1} ( \widehat {A_G} )$ и грани $G$ является некоторой гранью этого многогранника, которую мы обозначим через $E_G$.

\begin{figure}[h]
\center{\includegraphics[width=0.45\linewidth]{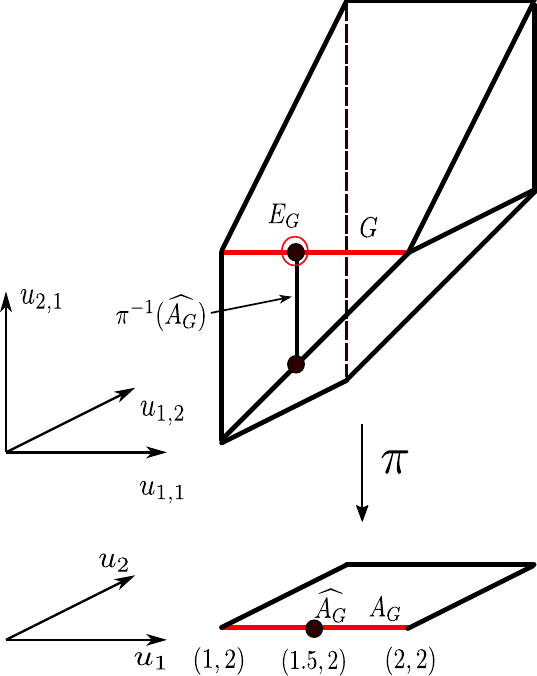}}
\caption{Иллюстрация к примеру \ref{geomlemma}.}
\label{figexample}
\end{figure}

\begin{example}\label{geomlemma}
	Рассмотрим многогранник $GZ(1\,2\,3\,)$, рис. \ref{figexample}. Пусть грань $G$ задана в координатах:  $u_{1,2}=u_{2,1}=2,\ 1 \le u_{1,1} \le2$,-- найдём $E_G$. Сначала выпишем условия, определяющие $A_G$: $ 1 \le u_1 \le2,\  u_2=2$, теперь ясно, что барицетр $\widehat {A_G}$ -- это точка $(1.5,2)$. Следующим шагом выпишем условия, определяющие полный прообраз барицентра $\pi^{-1} (\widehat{A_G} )$:  $u_{1,1}=1.5,\ u_{1,2}=2,\ 1.5 \le u_{2,1} \le 2$. Теперь ясно, что  искомое пересечение  $E_G$ -- это точка $(1.5,2,2)$.
	
\end{example}
Пользуясь введёнными обозначениями и приобретённой в результате рассмотрения примера интуицией, сформулируем и докажем лемму \ref{main lemma}.

\begin{lemma}\label{main lemma}
Отображение  из множества всех непустых граней многогранника $ GZ( 1^{i_1} \dots k^{i_k}) $ во множество всех непустых граней многогранников $\pi^{-1}( \widehat A )$, которое переводит грань $G$  в грань $E_G$ является биекцией, причём оно не сохраняет размерность грани, но изменяет её понятным образом, а именно 
\begin{equation}\label{dimension}
\dim\,G=\dim\,E_G+\dim\,A_G.
\end{equation}
\end{lemma}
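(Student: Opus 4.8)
\end{lemma}

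\begin{proof}[Набросок доказательства]
План таков: сначала я установлю тождество размерностей~(\ref{dimension}) как общий факт о линейной проекции многогранника, попутно извлекая из доказательства утверждение, которое позже снимает главную трудность; затем докажу биективность, предъявив явное обратное отображение.

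\emph{Размерность.} Сужение $\pi|_G\colon G\to A_G$ аффинно и сюръективно (лемма~\ref{lem1}), поэтому $\pi$ индуцирует аффинную сюръекцию аффинных оболочек $\operatorname{aff}G\to\operatorname{aff}A_G$, все слои которой имеют размерность $\dim G-\dim A_G$. Барицентр $\widehat{A_G}$ лежит в относительной внутренности $\operatorname{relint}A_G$, а аффинная сюръекция многогранника переводит относительную внутренность в относительную внутренность; значит, над $\widehat{A_G}$ висит точка из $\operatorname{relint}G$, то есть $E_G=G\cap\pi^{-1}(\widehat{A_G})$ пересекает $\operatorname{relint}G$. Отсюда $\operatorname{aff}E_G=\operatorname{aff}G\cap\pi^{-1}(\widehat{A_G})$ --- это слой указанной сюръекции над $\widehat{A_G}$, и потому $\dim E_G=\dim G-\dim A_G$. Нам понадобится попутный факт: \textit{$E_G$ содержит относительно внутреннюю точку грани $G$} (в частности, всякое справедливое на $G$ неравенство, обращающееся в равенство на $E_G$, тесно на всей грани $G$).

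\emph{Разделение уравнений.} Напомню, что непустая грань многогранника Гельфанда--Цетлина высекается обращением части клеток треугольной таблицы в равенства и сама однозначно задаёт набор своих \textit{тесных} (тождественно-равных на ней) клеток. Разобью клетки таблицы многогранника $P=GZ(1^{i_1}\dots k^{i_k})$ на $\mathcal E_\lambda$ --- клетки между первой и второй строками --- и $\mathcal E'$ --- все прочие. Тогда тесные клетки грани $G$ из $\mathcal E_\lambda$ --- это всегда-тесные клетки у замороженных позиций плюс ровно те клетки у $u_1,\dots,u_{k-1}$, что фиксируют $u_r=r$ или $u_r=r+1$; иными словами, они кодируют грань $A_G=\pi(G)$ куба $C$, и всякая грань $C$ так кодируется. С другой стороны, многогранник $\pi^{-1}(\widehat{A_G})$, получающийся из $P$ фиксацией всей второй строки значениями точки $\widehat{A_G}$, есть многогранник Гельфанда--Цетлина, комбинаторно эквивалентный~(\ref{main row}), причём атомарные неравенства его таблицы суть в точности клетки из $\mathcal E'$; а $E_G=G\cap\pi^{-1}(\widehat{A_G})$ --- это его грань, высекаемая теми клетками из $\mathcal E'$, что тесны на $G$.

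\emph{Обратное отображение и проверки.} По грани $A$ куба $C$ и грани $F$ многогранника $Q_A:=\pi^{-1}(\widehat A)$ положу $\Psi(A,F)$ равным грани многогранника $P$, высекаемой теми клетками из $\mathcal E_\lambda$, что задают $A$, вместе с теми клетками из $\mathcal E'$, что задают $F$. Это непустая грань $P$: она содержит $F$ (всякая точка $F$ лежит в $\pi^{-1}(\widehat A)$, где уравнения грани $A$ выполнены автоматически), а каждое уравнение вида $a=b$ высекает грань, ибо $a\le b$ на всём $P$. Осталось проверить, что $\Psi$ обратно к $G\mapsto(A_G,E_G)$. С одной стороны, $\pi(\Psi(A,F))$ --- грань куба $C$, лежащая в $A$ и содержащая $\widehat A\in\operatorname{relint}A$, а потому равная $A$; значит, $A_{\Psi(A,F)}=A$, и тогда $E_{\Psi(A,F)}=\Psi(A,F)\cap\pi^{-1}(\widehat A)$, что совпадает с $F$ (вновь в силу автоматичности $\mathcal E_\lambda$-уравнений на $\pi^{-1}(\widehat A)$). С другой стороны, грань $\Psi(A_G,E_G)$ высекается $\mathcal E_\lambda$-клетками, тесными на $G$, и $\mathcal E'$-клетками, тесными на $E_G$; но по выделенному выше факту последние тесны и на $G$, так что $\Psi(A_G,E_G)$ высекается ровно множеством тесных клеток грани $G$, то есть равна $G$. Итак, $G\mapsto(A_G,E_G)$ --- биекция; поскольку многогранники $\pi^{-1}(\widehat A)$ при разных $A$ лежат в разных слоях $\pi$ и не пересекаются, это равносильно биективности $G\mapsto E_G$, а формула размерности из второго абзаца применима к каждой паре $(G,E_G)$. Главной трудностью я считаю именно этот последний шаг --- корректное разделение описания грани $G$ на $\mathcal E_\lambda$-часть и $\mathcal E'$-часть без <<просачивания>> нижних уравнений в управление образом $A_G$; ключ к нему --- подмеченный при доказательстве размерностного тождества факт о том, что $E_G$ задевает относительную внутренность грани $G$.
\end{proof}
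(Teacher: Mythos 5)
Ваше доказательство верно, но идёт по существенно иному пути, чем в статье. Авторское доказательство чисто геометрическое: строится производное измельчение многогранника $GZ(1^{i_1}\dots k^{i_k})$, вершины которого выбраны в слоях над барицентрами граней куба $C$; тогда грани $G$ исходного многогранника и грани $E_G$ слоёв $\pi^{-1}(\widehat A)$ оказываются одновременно занумерованы вершинами этого измельчения (в относительной внутренности каждой грани $G$ лежит ровно одна такая вершина, и она попадает в относительную внутренность $E_G$), что даёт биекцию одним махом, а формула (\ref{dimension}) выводится ссылкой на теорему о ранге и ядре. Вы же работаете с системами тесных неравенств: разбиваете клетки треугольной таблицы на $\mathcal E_\lambda$ (кодирующие $A_G$) и $\mathcal E'$ (кодирующие $E_G$ внутри $\pi^{-1}(\widehat{A_G})$) и предъявляете явное обратное отображение $\Psi$. Ключевой геометрический факт в обоих подходах один и тот же --- слой над барицентром задевает $\operatorname{relint} G$, поскольку аффинная сюръекция многогранников переводит относительную внутренность на относительную внутренность; вы верно вычленяете его и применяете ровно там, где он нужен (чтобы $\mathcal E'$-клетка, тесная на $E_G$, оказалась тесной на всей грани $G$). Ваш вариант более самодостаточен (не требует симплициальных комплексов) и к тому же закрывает деталь, оставленную в статье без комментария, --- почему $E_G$ вообще является гранью многогранника $\pi^{-1}(\widehat{A_G})$; авторский вариант короче и делает <<общую нумерацию>> двух решёток граней наглядной. Ваш вывод формулы (\ref{dimension}) --- аккуратно расписанная версия однострочной ссылки статьи на равенство ранга и дефекта.
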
 

Мы приведём доказательство леммы \ref{main lemma}, которое опирается на определения \textit{симплициального комплекса} и \textit{производного измельчения многогранника}. Напомним эти определения для полноты картины.

\begin{definition}
Сиплициальный комплекс $K$ -- это такой конечный набор симплексов, лежащих в некотором евклидовом пространстве, что
\begin{enumerate}
\item любая грань симплекса из $K$ снова принадлежит $K$;
\item пересечение любых двух симплексов $\sigma_1, \, \sigma_2 \in K$ либо пусто, либо является гранью как симплекса $\sigma_1$, так и симплекса $\sigma_2$.
\end{enumerate}
Множество $|K|=\bigcup_{\sigma \in K} \sigma$ называется телом комплекса $K$.
\end{definition}

\begin{definition}\label{bar subdivision}
Производное измельчение многогранника $P$ -- это симплициальный комплекс $\Delta^P$, описание которого мы дадим с помощью индукции по размерности граней многогранника $P$.
\begin{itemize}
\item (база индукции) Каждая нульмерная грань многогранника $P$ совпадает со своим производным  измельчением, поэтому симплексами комплекса $\Delta^P$, полученными на первом шаге, являются вершины многогранника $P$. 
\item (шаг индукции) Предположим, что мы построили производное измельчение всех граней $F\in L(P)$ размерности от $0$ до $k-1$. Для каждой $k$-мерной грани многогранника $P$ мы делаем следующее: во внутренности грани фиксируем некоторую точку, а затем берем выпуклую оболочку зафиксированной точки с каждым из $(k-1)$-мерных симплексов, лежащих в границе рассматриваемой грани. Заметим, что выпуклая оболочка $(k-1)$-мерного симплекса и точки, не принадлежащей его аффинной оболочке, есть в точности $k$-мерный симплекс. Все грани полученных таким образом $k$-мерных симплексов, которые ещё не были включены в $\Delta^P$, на этом шаге добавляются.
 
\end{itemize}
\end{definition}
Непосредственно из определения \ref{bar subdivision} следует, что множество вершин производного измельчения взаимнооднозначно соответствует множеству граней исходного многогранника.

\begin{remark}
Если при построении производного измельчения многогранника фиксировать во внутренности каждой грани не произвольную точку, а барицентр соответствующей грани, то получаемый симплициальный комплекс называют барицентрическим подразбиением (или барицентрическим измельчением) исходного многогранника. Таким образом, барицентрическое подразбиение -- это частный случай производного измельчения.
\end{remark}

Теперь мы готовы доказать лемму \ref{main lemma}.

\begin{proof}	
Рассмотрим производное измельчение куба $C$, вершины которого являются барицентрами граней, а также производное измельчение многогранника Гельфанда--Цетлина, вершины которого лежат в прообразах барицентров граней куба $C$ при проекции $\pi$. По определению производного измельчения, во внутренности каждой грани $G$ исходного многогранника Гельфанда--Цетлина есть единственная вершина производного измельчения, и она по построению лежит в грани $E_G$. Поскольку она лежит в грани $E_G$  и во внутренности грани $G$, то она лежит во внутренности грани $E_G$. Причём верно, что такая вершина только одна, потому что внутренность $E_G$ высекается из внутренности единственной грани исходного многогранника, а именно из внутренности грани $G$. Таким образом, и грани исходного многогранника, и грани многогранников $\pi^{-1}(\widehat A)$ оказались занумерованы вершинами производного измельчения исходного многогранника, причём грани $G$  и $E_G$ имеют один и тот же номер.

Формула (\ref{dimension}), связывающая размерности граней $G$ и $E_G$, может быть выписана сразу, как только мы вспомним,что размерность образа конечномерного векторного пространства при линейном отображении равна разности размерностей пространства и ядра отображения.
\end{proof}

\begin{corollary}\label{main corollary}
Можно восстановить $f$-многочлен многогранника $GZ( 1^{i_1} \dots k^{i_k})$ по $f$-многочленам многогранников $\pi^{-1}(\widehat A )$ при помощи равенства 
	
\begin{equation}\label{main formular}
f_{GZ}(t)=\sum_{A \in L(C)}t^{\dim\, A}f_{\pi^{-1}(\widehat A )}(t).
\end{equation}

\end{corollary}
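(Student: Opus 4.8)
План состоит в том, чтобы напрямую применить биекцию из леммы \ref{main lemma} и переупорядочить сумму, группируя грани многогранника $GZ := GZ(1^{i_1} \dots k^{i_k})$ по их образам при проекции $\pi$. Обозначу через $f^Q_j$ число $j$-мерных граней произвольного выпуклого многогранника $Q$ (так что $f_Q(t) = \sum_j f^Q_j t^j$), и положу $f^Q_j = 0$ при $j < 0$. Фиксируем размерность $i$ и подсчитаем $f^{GZ}_i$. По лемме \ref{lem1} каждой непустой грани $G$ многогранника $GZ$ сопоставлена грань $A_G = \pi(G)$ куба $C$; по лемме \ref{main lemma} соответствие $G \mapsto E_G$ есть биекция между непустыми гранями $GZ$ и непустыми гранями всех многогранников $\pi^{-1}(\widehat A)$, $A \in L(C)$, причём $G$ переходит в грань $E_G$ многогранника $\pi^{-1}(\widehat{A_G})$ и $\dim G = \dim E_G + \dim A_G$. Обращая это соответствие (и замечая, что $E$ однозначно определяет $A$, так как слои $\pi^{-1}(\widehat A)$ над различными барицентрами не пересекаются), получаем, что грани размерности $i$ многогранника $GZ$ взаимно однозначно соответствуют парам $(A, E)$, где $A \in L(C)$, а $E$ — грань многогранника $\pi^{-1}(\widehat A)$ размерности $i - \dim A$; при этом слагаемые с $\dim A > i$ не возникают, ибо $\dim E \ge 0$. Следовательно,
\[
f^{GZ}_i = \sum_{A \in L(C)} f^{\,\pi^{-1}(\widehat A)}_{\,i - \dim A}.
\]

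Остаётся домножить обе части на $t^i$, просуммировать по всем $i$ от $0$ до $\dim GZ$ и во внутренней сумме сделать замену $j = i - \dim A$:
\[
f_{GZ}(t) = \sum_i t^i f^{GZ}_i = \sum_{A \in L(C)} \sum_i t^i f^{\,\pi^{-1}(\widehat A)}_{\,i - \dim A} = \sum_{A \in L(C)} t^{\dim A} \sum_{j} t^{j} f^{\,\pi^{-1}(\widehat A)}_{\,j} = \sum_{A \in L(C)} t^{\dim A} f_{\pi^{-1}(\widehat A)}(t),
\]
что и есть формула (\ref{main formular}).

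Всё содержательное здесь сосредоточено в лемме \ref{main lemma}; после неё доказательство следствия — это аккуратная перегруппировка конечной суммы, и я не ожидаю тут настоящих трудностей. Единственное, за чем нужно последить, — это соглашения о пустой грани: биекция леммы \ref{main lemma} установлена именно между \emph{непустыми} гранями, а $f$-многочлен по определению пустую грань не учитывает, так что никаких поправочных слагаемых (вроде члена с $t^{-1}$) не появляется; кроме того, надо убедиться, что отрицательные индексы $f$-многочленов прообразов барицентров вклада не дают, что обеспечивается неравенством $\dim A_G \le \dim G$, вытекающим из (\ref{dimension}). Таким образом, следствие \ref{main corollary} есть чисто комбинаторное переписывание уже доказанной геометрической леммы.
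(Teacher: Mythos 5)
Ваше доказательство верно и совпадает по существу с подходом статьи: в работе следствие \ref{main corollary} вообще не снабжено отдельным доказательством, поскольку оно считается непосредственным следствием биекции и формулы размерностей из леммы \ref{main lemma}, а вы лишь аккуратно выписываете соответствующую перегруппировку суммы. Замечания про пустую грань и про неотрицательность индекса $i-\dim A$ уместны и ничего не меняют по существу.
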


\begin{example}
	Пользуясь формулой (\ref{main formular}) и примером \ref{preimage}, выпишем $f$-многочлен многогранника $GZ(1\, 2\,3)$.
\begin{equation*}
	\begin{split}
	f_{GZ(1\,2\,3)}&=f_{GZ(1\ 2)}+f_{GZ(1\ 3)}+f_{GZ(2^2)}+f_{GZ(2\ 3)}+\\
	&+t \cdot ( f_{GZ(1.5\ 2)} +f_{GZ(1.5\ 3)}+f_{GZ(1\ 2.5)}+f_{GZ(2\ 2.5)})+\\
	&+t^2 \cdot f_{GZ(1.5\  2.5)}
	\end{split}
\end{equation*}
В рассматриваемом случае один из прообразов нульмерен, а все остальные комбинаторно эквивалентны отрезку, поэтому искомый $f$-многочлен равен
\begin{equation*}
	\begin{split}
		f_{GZ(1\,2\,3)}(t)&=(2+t)+(2+t)+1+(2+t)+\\
		&+t \cdot \big( (2+t) +(2+t)+(2+t)+(2+t)\big)+\\
		&+t^2 \cdot (2+t)=\\
		&=7+11t+6t^2+t^3.
	\end{split}
\end{equation*}

То есть многогранник $GZ(1\,2\,3)$ имеет $7$ вершин, $11$ рёбер и $6$ двумерных граней.
\end{example}

Настало время объединить полученные знания в одну формулу. Вспомним, что мы занумеровали барицентры граней куба $C$ мономами многочлена $p:= \prod_{j=1}^{k-1}(x_j+x_{j+0.5}+x_{j+1} )$. Определим коэффициенты $c_{\alpha_1,\alpha_{1.5}, \dots, \alpha_{k-0.5}, \alpha_k}$ таким образом, что
\begin{equation}
p=\sum_{\alpha_1,\alpha_{1.5}, \dots, \alpha_{k-0.5}, \alpha_k} c_{\alpha_1,\alpha_{1.5}, \dots, \alpha_{k-0.5}, \alpha_k} \, x_1^{\alpha_1} x_{1.5}^{\alpha_{1.5}}\cdots  x_{k-0.5}^{\alpha_{k-0.5}} x_k^{\alpha_k}.
\end{equation}
Итоговая формула будет более читаемой, если мы будем использовать мульти-индексы  $\alpha=(\alpha_1,\alpha_{1.5}, \dots, \alpha_{k-0.5}, \alpha_k)$ и $i=(i_1, i_2, \dots, i_k)$ для её записи. Обозначим через $GZ[\alpha, \,i]$ многогранник Гельфанда--Цетлина, комбинаторно эквивалентный $\pi^{-1}(\widehat A_{\alpha})$, явный вид которого мы уже выписали формулой (\ref{main row}), тогда имеет место 
\begin{theorem}\label{recurrence relation}
Справедливо следующее рекуррентное соотношение на 
$f$-многочлен многогранника Гельфанда--Цетлина $GZ(1^{i_1} \dots k^{i_k})$
\begin{equation}
	f_{GZ(1^{i_{^1}} \dots \, k^{i_{^k}})}(t)=\sum_{\alpha} c_{\alpha} \cdot t^{\dim (A_{\alpha})} \cdot f_{GZ[\alpha,\,i]}(t).
\end{equation}
\end{theorem}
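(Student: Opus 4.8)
План состоит в том, чтобы вывести рекуррентное соотношение непосредственно из Следствия \ref{main corollary}, переписав сумму по граням куба $C$ как сумму по мономам многочлена $p$. Формула (\ref{main formular}) уже выражает $f_{GZ(1^{i_1}\dots k^{i_k})}$ в виде $\sum_{A\in L(C)} t^{\dim A}\, f_{\pi^{-1}(\widehat A)}(t)$, так что остаётся лишь переупаковать правую часть. Сначала я воспользуюсь взаимно однозначным соответствием, установленным перед примером \ref{preimage}, между гранями куба $C$ и $3^{k-1}$ способами выбрать по одному слагаемому из каждой скобки многочлена $p=\prod_{j=1}^{k-1}(x_j+x_{j+0.5}+x_{j+1})$: грани $A$ отвечает такой выбор, а этот выбор порождает моном $x_1^{\alpha_1}x_{1.5}^{\alpha_{1.5}}\cdots x_k^{\alpha_k}$ с некоторым мульти-индексом показателей $\alpha$. Тем самым сумма по $A\in L(C)$ превращается в сумму по этим $3^{k-1}$ выборам.

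Далее нужно проверить, что каждое слагаемое суммы зависит от выбора только через порождаемый им моном. В самом деле, $\dim A$ равно числу свободных координат грани $A$ куба $C$, то есть числу скобок, из которых выбрано полуцелое слагаемое $x_{j+0.5}$, а именно $\dim A=\alpha_{1.5}+\alpha_{2.5}+\dots+\alpha_{k-0.5}$; обозначим эту величину через $\dim A_\alpha$. Кроме того, по формуле (\ref{main row}) многогранник $\pi^{-1}(\widehat A)$ комбинаторно эквивалентен $GZ[\alpha,i]$, определяющая неубывающая последовательность которого восстанавливается по $\alpha$ и $i$ и не зависит от того, на каких именно местах стоят полуцелые числа; здесь я использую критерий комбинаторной эквивалентности из начала настоящего раздела (биекция между наборами чисел, сохраняющая отношения $=$ и $<$, даёт комбинаторно эквивалентные многогранники Гельфанда--Цетлина, а у комбинаторно эквивалентных многогранников совпадают $f$-многочлены), откуда $f_{\pi^{-1}(\widehat A)}=f_{GZ[\alpha,i]}$. Остаётся сгруппировать эти $3^{k-1}$ выборов по порождаемому моному: число выборов, дающих фиксированный $\alpha$, по определению равно коэффициенту $c_\alpha$ при $x_1^{\alpha_1}\cdots x_k^{\alpha_k}$ в $p$, и потому
\[
\sum_{A\in L(C)} t^{\dim A}\, f_{\pi^{-1}(\widehat A)}(t)=\sum_{\alpha} c_\alpha\, t^{\dim A_\alpha}\, f_{GZ[\alpha,i]}(t),
\]
что и есть требуемое равенство.

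Всё существенное уже сделано в Леммах \ref{lem1} и \ref{main lemma} и Следствии \ref{main corollary}; остаётся лишь аккуратно оформить описанную группировку, в первую очередь убедиться, что $GZ[\alpha,i]$ корректно определён с точностью до комбинаторной эквивалентности независимо от выбора грани куба с данным вектором показателей $\alpha$ (это как раз обеспечивается двухшаговым построением первой строки перед формулой (\ref{main row})) и что $\dim A$ действительно зависит только от $\alpha$. Попутно заметим, что группировка на деле ничего не склеивает. Пусть два различных выбора впервые расходятся в скобке с номером $j^*$. Если один из них берёт там $x_{j^*+0.5}$, то мономы различаются показателем при этом символе, так как он не возникает больше ни из какой скобки. Иначе один выбор берёт $x_{j^*}$, а другой $x_{j^*+1}$; но показатель при $x_{j^*}$ набирается лишь из скобок $j^*-1$ и $j^*$, а по скобке $j^*-1$ выборы совпадают, так что в двух мономах он отличается ровно на единицу. Итак, все $3^{k-1}$ выборов дают попарно различные мономы и $c_\alpha\in\{0,1\}$ --- впрочем, для доказательства это уточнение не нужно.
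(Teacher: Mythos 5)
Ваше рассуждение верно и по существу совпадает с тем, как теорема~\ref{recurrence relation} доказывается в статье: это та же перегруппировка суммы из следствия~\ref{main corollary} по мономам многочлена $p$ с использованием формулы~(\ref{main row}) и того, что $\dim A_\alpha$ равно числу полуцелых координат барицентра. Ваша дополнительная проверка того, что различные выборы слагаемых из скобок дают попарно различные мономы (то есть $c_\alpha\in\{0,1\}$), лишь аккуратно обосновывает утверждаемую в статье биекцию между барицентрами граней и мономами и, как вы сами отмечаете, для вывода формулы не обязательна.
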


\section{Вычисления с использованием рекуррентного соотношения} \label{Calculate}
Покажем, что найденное рекуррентное соотношение позволяет вычислить $f$-мно\-гочлен для некоторых серий многогранников Гельфанда--Цетлина. А именно, рассмотрим однопараметрические семейства, соответствующие неубывающим последовательностям, содержащим три различных числа:  $GZ(1\,2^k \, 3)$ и  $GZ(1\, 2\, 3^k)$. (Семейство $GZ(1^k\, 2\, 3)$ в силу леммы \ref{onelemma} не даёт нам нового знания, поэтому его мы отдельно не рассматриваем.)

\begin{lemma}\label{onelemma}
	Если между наборами чисел $\lambda_1 \le \dots \le \lambda_s$ и $\lambda_1^{\prime} \le \dots \le \lambda_s^{\prime}$ существует биекция, сохраняющая отношение $=$, но обращающая строгий порядок, то соответствующие многогранники Гельфанда--Цетлина комбинаторно эквивалентны.
\end{lemma}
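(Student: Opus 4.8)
The plan is to reduce the statement to a single concrete instance and then exhibit an explicit affine isomorphism of the ambient space. First, by the combinatorial-equivalence criterion established just before the statement (a bijection between $\lambda_1\le\dots\le\lambda_s$ and a second non-decreasing sequence that respects both $=$ and $<$ yields combinatorially equivalent Gельфанд--Цетлин polytopes), it suffices to treat one representative of each pattern of equalities and strict inequalities among consecutive terms. The hypothesis supplies a bijection respecting $=$ and reversing $<$, so $\lambda_1'\le\dots\le\lambda_s'$ has exactly the same pattern of $=$'s and $<$'s as the sequence $-\lambda_s\le\dots\le-\lambda_1$; hence that criterion already gives $GZ(\lambda_1'\,\dots\,\lambda_s')\simeq GZ(-\lambda_s\,\dots\,-\lambda_1)$, and everything comes down to proving
\[
GZ(\lambda_1\,\dots\,\lambda_s)\simeq GZ(-\lambda_s\,\dots\,-\lambda_1).
\]

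To prove this I would write down the map $\Phi\colon\R^{s(s-1)/2}\to\R^{s(s-1)/2}$ that, in the coordinates of table (\ref{tab}), sends $(u_{i,j})$ to $(v_{i,j})$ with $v_{i,j}:=-u_{i,\,s-i+1-j}$; geometrically, $\Phi$ reflects each row of the triangular table about its vertical axis and negates every entry (with the convention $u_{0,j}=\lambda_j$ for the top row, which $\Phi$ sends to $-\lambda_s,\dots,-\lambda_1$). Being a signed permutation of the coordinates, $\Phi$ is a linear automorphism of $\R^{s(s-1)/2}$. The core of the argument is then a bookkeeping check at the level of individual triangles: a triangle of table (\ref{tab}) with upper entries $a=u_{i-1,m}$, $c=u_{i-1,m+1}$ and lower entry $b=u_{i,m}$, carrying the constraint $a\le b\le c$, is taken by $\Phi$ to a triangle whose lower entry is $-b$ and whose two upper entries are $-c$ on the left and $-a$ on the right --- the left/right roles are interchanged by the row reflection --- and the constraint $a\le b\le c$ becomes $-c\le-b\le-a$, which is precisely the Gельфанд--Цетлин inequality attached to that triangle in the table (\ref{tab}) built for the sequence $-\lambda_s\le\dots\le-\lambda_1$. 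Running over all triangles shows that $\Phi$ carries the entire defining system of $GZ(\lambda_1\,\dots\,\lambda_s)$ onto that of $GZ(-\lambda_s\,\dots\,-\lambda_1)$, so $\Phi$ restricts to an affine bijection between the two polytopes.

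An affine isomorphism of the ambient space carrying one polytope onto the other induces an isomorphism of their face lattices, i.e.\ a combinatorial equivalence; chaining this with $GZ(\lambda_1'\,\dots\,\lambda_s')\simeq GZ(-\lambda_s\,\dots\,-\lambda_1)$ completes the proof. The only delicate point I expect is the index computation for the row reflection --- verifying that under $j\mapsto s-i+1-j$ the three entries of each triangle pair up as claimed and that the two upper entries get swapped --- but this is routine once the indices of (\ref{tab}) are tracked carefully, and the underlying geometric idea, ``reflect the table and negate'', is transparent.
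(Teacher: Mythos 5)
Your proposal is correct and follows essentially the same route as the paper: reduce via the order-preserving criterion to showing $GZ(\lambda_1'\dots\lambda_s')\simeq GZ(-\lambda_s\dots-\lambda_1)$, and then exhibit the same ``reflect each row and negate'' linear involution $u_{i,j}\mapsto -u_{i,\,s-i+1-j}$ (identical to the paper's $F$, since $(s-i)-(j-1)=s-i+1-j$), checking triangle by triangle that it carries one defining system onto the other. The index bookkeeping you flag as the only delicate point indeed works out exactly as you describe.
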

\begin{proof} 
	Рассмотрим многогранники $GZ(\lambda_1 \dots \lambda_s)$ и $GZ({-\lambda_s}  \dots {-\lambda_1})$, покажем, что они аффинно (и, следовательно, комбинаторно) эквивалентны.
		
	Будем считать, что координаты точек пространства $\R^ \frac{s(s-1)}{2}$ занумерованы так же, как в треугольной таблице (\ref{tab}), то есть двойным индексом $(i,j)$, где $1 \le i \le s-1$, $1 \le j \le s-i$. Рассмотрим линейное отображение $F$, переводящее точку с координатами $u_{i,j}$ в точку с координатами $u'_{i,j}$, где $u'_{i,j}=-u_{i,(s-i)-(j-1)}$. (Поясним происходящее с точки зрения геометрии треугольной таблицы: $F$ отражает треугольную таблицу относительно вертикальной оси и обращает все знаки.) Найдём образ многогранника $GZ(\lambda_1 \dots  \lambda_s)$ при линейном отображении $F$. Несложно показать, что линейные неравенства, связывающие первую и вторую строку треугольной таблицы, определяющей многогранник  $GZ(\lambda_1 \dots  \lambda_s)$, перейдут в неравенства вида  $-\lambda_{s-j+1} \le u'_{1,j} \le -\lambda_{s-j}$ (здесь $j$ пробегает значения от $1$ до $s-1$), а  остальные неравенства -- в неравенства вида  $u'_{i-1,j} \le u'_{i,j} \le u'_{i-1,j+1}$ (здесь $i$ пробегает значения от $2$ до $s-1$ и $j$ пробегает значения от $1$ до $s-i$). Таким образом, $F\big(GZ(\lambda_1 \dots \lambda_s)\big)=GZ({-\lambda_s}  \dots {-\lambda_1})$. Ясно, что линейное отображение $F$ обратно само себе и, следовательно, обратимо. Значит многогранники $GZ(\lambda_1 \dots \lambda_s)$ и $GZ({-\lambda_s}  \dots {-\lambda_1})$ аффинно (и, следовательно, комбинаторно) эквивалентны.
	
	Обозначим через $\phi$ биекцию  из  $\lambda_1^{\prime}\le \dots \le \lambda_s^{\prime}$ в  $\lambda_1 \le \dots \le \lambda_s$, сохраняющую отношение $=$, но обращающую строгий порядок, и обозначим через $\psi$ биекцию между наборами чисел $\lambda_1  \le \dots \le \lambda_s$ и $-\lambda_s \le  \dots \le -\lambda_1$, при которой $\lambda_j \mapsto -\lambda_j$. Тогда композиция $\psi \circ \phi$ будет биекцией из $\lambda_1^{\prime} \le \dots \le \lambda_s^{\prime}$ в  $-\lambda_s  \le \dots \le -\lambda_1$, сохраняющей отношения $=$ и $<$, как следствие 
	$GZ(\lambda_1^{\prime} \dots \lambda_s^{\prime}) \simeq GZ({-\lambda_s}  \dots {-\lambda_1})$. Ранее мы показали, что $GZ(\lambda_1 \dots \lambda_s) \simeq GZ({-\lambda_s}  \dots {-\lambda_1})$. Так как отношение эквивалентности транзитивно, то $GZ(\lambda_1^{\prime} \dots \lambda_s^{\prime}) \simeq GZ(\lambda_1  \dots \lambda_s)$.	
\end{proof}

\subsection{Вычисление $f$- и $h$-многочлена многогранников $GZ(1\  2^k\   3)$}\label{subsecGZ123k}
Рассмотрим серию многогранников $GZ(1\,2^k \, 3)$. В этом случае куб $C$, как и в примере \ref{preimage}, может быть задан в координатах неравенствами $1 \le u_1 \le 2 \le u_2 \le 3$.

Многогранники, комбинаторно эквивалентные полным прообразам барицентров граней куба $C$, в рассматриваемом случае  получаются очень просто. Пусть барицентр $\widehat A$  некоторой грани куба $C$ имеет координаты $(u_1,u_2)$, тогда $\pi^{-1}(\widehat A)\simeq GZ(u_1 \ 2^{k-1}\ u_2)$. К примеру, многогранник, комбинаторно эквивалентный полному прообразу барицентра двумерной грани, оказывается таким: 
$$\pi^{-1}(1.5,2.5) \simeq GZ(1.5\ 2^{k-1}  \, 2.5 ) \simeq GZ(1\ 2^{k-1}\, 3).$$
(Для второго перехода мы используем достаточно очевидное замечание \ref{AlmostOb}.) Опуская технические детали, связанные с подстановкой полученных данных в формулу (\ref{main formular}) и приведением подобных слагаемых ($f$-многочленов $GZ(1\ 2^{k-1}\, 3)$, а также $f$-многочленов симплексов $GZ(1\ 2^k)$ и $GZ(2^k\ 3)$, см. пример \ref{simplexGZ}), выпишем полученное рекуррентное соотношение для $f$-многочлена многогранника $GZ(1\ 2^{k}\ 3)$:

\begin{equation} \label{rec123}	
		f_{GZ(1\ 2^{k}\ 3)}(t)= (1+2t+t^2)\cdot f_{GZ(1\ 2^{k-1} \, 3)}+(2+2t)\cdot f_{GZ(1\ 2^k)}+1.	
\end{equation}
Будем решать его с учётом того, что $f$-многочлен симплекса $GZ(1\ 2^k)$ нам известен. Итак, соотношение (\ref{rec123}) имеет вид $b_k=\mu b_{k-1}+\nu a_k +1,$ где $b_k=f_{GZ(1\ 2^{k}\ 3)}$ и $a_k=f_{GZ(1\ 2^k)}$ -- это $f$-многочлены, а $\mu=(1+t)^2$ и $\nu=2+2t$ -- это коэффициенты, зависящие от $t$. Его общее решение имеет вид:

\begin{equation} \label{general solution}
	b_k=\mu^k b_0 + \sum_{j=1}^k \mu^{k-j}(\nu a_j +1).
\end{equation}
Так как $b_0=f_{GZ(1\, 3)}$ и $a_j=\frac{(1+t)^{j+1}-1}{t}$, то общее решение рекуррентного соотношения (\ref{rec123}) имеет вид:
\begin{equation} \label{GZ(12k3)}
f_{GZ(1\ 2^{k}\ 3)}(t)=(1+t)^{2k}\cdot(2+t)+\sum_{j=1}^k(1+t)^{2(k-j)}\left((2+2t)\cdot \frac{(1+t)^{j+1}-1}{t} +1\right).
\end{equation}
В нашем случае удобно перейти к многочлену $h_{GZ(1\ 2^k\ 3)}(s)=f_{GZ(1\ 2^k\ 3)}(s-1)=\sum_{i=0}^{2k+1}h_is^i$:  

\begin{equation}\label{h123k}
h_{GZ(1\ 2^{k}\ 3)}(s)=s^{2k}\cdot(1+s)+\sum_{j=1}^k s^{2(k-j)}\left(2s\cdot \frac{s^{j+1}-1}{s-1} +1\right).
\end{equation}
\begin{theorem}\label{hvector12k3}
 Компоненты $h$-вектора многогранника $GZ(1\, 2^{k}\,3)$, имеющего размерность $2k+1$, имеют вид
$$h_{2k+1}=1,$$
 \begin{equation}\label{h123kcases} h_{i}=
	 \begin{cases}
	 i+1, \, \textit{если}\  0\le i \le k+1;\\
	 2k+3-i, \textit{если}\  k+1 < i \le 2k.
	 \end{cases}
 \end{equation} 
\end{theorem}
\begin{proof}
Перепишем многочлен (\ref{h123k}) так, чтобы было проще вычислить его коэффициенты
\begin{equation}\label{counting formular}
	h_{GZ(1\ 2^{k}\ 3)}(s)=s^{2k+1}+ \sum_{j=0}^ks^{2k-2j}+2\cdot \sum_{j=1}^{k}(s^{2k+1-j}+ \dots + s^{2k+1-2j}).
\end{equation}
Рассмотрим подробнее сумму $\sum_{j=1}^{k}(s^{2k+1-j}+ \dots + s^{2k+1-2j})$. При фиксированном значении  $j$ слагаемые внутри соответствующей скобки -- это степени $s$, показатели которых убывают от $2k+1-j$ до $2k+1-2j$. При увеличении значения  $j$ на единицу количество слагаемых в скобке возрастает на один, при этом наибольший показатель убывает на единицу, а наименьший убывает на две единицы. Это наблюдение даёт возможность представлять себе рассматриваемую сумму в виде ступеней, каждая из которых соответствует конкретному значению $j$ (см. пример \ref{h(12^53)}). При этом для каждого нечётного числа от $1$ до $2k-1$ и для каждого чётного числа от $2$ до $2k$ несложно вычислить значение $j'$, при котором оно возникает в качестве показателя степени впервые (в случае нечётного -- у последнего слагаемого, а в случае чётного -- у предпоследнего слагаемого в некоторой скобке). Далее заметим, что, степень, появившись в некоторой скобке впервые, в каждой следующей скобке продвигается на одну позицию ближе к началу, причём возможны два варианта:
\begin{itemize}
	\item при некотором значении $j''$ она становится старшим членом в скобке и (если $j''\ne k$)  в последующих скобках уже не появляется,
	\item присутствует во всех последующих скобках (быть может, став старшим членом в последней, соответствующей значению $j=k$).
\end{itemize}
Так как коэффициенты степеней $s$ в каждой скобке равны единице, то для вычисления коэффициента перед заданной степенью достаточно посчитать количество скобок, в которых она встречается. Рассмотрим отдельно случаи чётного и нечётного показателя. 

Найдём $j'$ и $j''$ для степени $s^{2k-2l}$. Так как $2k+2-2j'=2k-2l$, то $j'=l+1$. Так как $2k+1-j''=2k-2l$, то $j''=2l+1$.
 
Найдём $j'$ и $j''$ для степени $s^{2k-2l-1}$. Так как $2k+1-2j'=2k-2l-1$, то $j'=l+1$. Так как $2k+1-j''=2k-2l-1$, то $j''=2l+2$.

Теперь мы можем вычислить коэффициент перед заданными степенями:
\begin{equation*}
[\min\{2l+1,k\}-l]\, s^{2k-2l},\ 
[\min\{2l+2,k\}-l]\, s^{2k-2l-1}.
\end{equation*}
Подставляя полученные формулы в многочлен (\ref{counting formular}), получим 

$h_{2k+1}=1$, 

$h_{2k-2j}=1+2(\min\{2j+1,k\}-j)$ при $0\le j\le k$,

$h_{2k-2j-1}=2(\min\{2j+2,k\}-j)$ при $0 \le j \le k-1$.
Далее непосредственное вычисление показывает нам, что компоненты $h$-вектора многогранника $GZ(1\, 2^{k}\,3)$, неформально говоря, образуют несимметричную горку, вершиной которой (наибольшей компонентой вектора) является $h_{k+1}$ и справедливы формулы (\ref{h123kcases}).
\end{proof}

\begin{example}\label{h(12^53)}
	Положим $k=5$ и найдём $h$-многочлен и $h$-вектор многогранника $GZ(1\, 2^5\, 3)$.
	При заданном значении $k$ формула (\ref{counting formular}) принимает вид (часть формулы разрезана на ступени, соответствующие значениям $j$ от 1 до 5):
	\medskip
	\begin{equation*}
	\begin{aligned}		
	  h_{GZ(1\ 2^5\ 3)}(s)=s^{11}+ \smash{\sum_{j=0}^5}s^{10-2j}+2\cdot
	  (s^{10}&+s^9&\!\!\!\!+\,&\\
	         &+s^9&\!\!\!\!+\,&s^8&\!\!\!\!+\,&s^7&\!\!\!\!+\,&\\
   	         &    &\!\!\!\!+\,&s^8&\!\!\!\!+\,&s^7&\!\!\!\!+\,&s^6+s^5&\!\!\!\!+\,&\\
	         &    &           &   &\!\!\!\!+\,&s^7&\!\!\!\!+\,&s^6+s^5&\!\!\!\!+\,&s^4+s^3+&\\
	         &    &           &   &           &   &\!\!\!\!+\,&s^6+s^5&\!\!\!\!+\,&s^4+s^3+&\!\!\!\!s^2+s).\\
	\end{aligned}
	\end{equation*}
	Мы получили многочлен $11$-ой степени, коэффициенты которого легко считываются по столбцам одинаковых степеней с учётом того, что сумма $$\sum_{j=0}^5s^{10-2j}= s^{10}+s^8+s^6+s^4+s^2+1$$ делает вклад равный единице в каждый коэффициент, стоящий при степени с чётным показателем.
	Итак,
	\begin{equation*}
	\begin{split}
	h_{GZ(1\ 2^5\ 3)}(s)=\,&s^{11}+3s^{10}+4s^9+5s^8+6s^7+7s^6+\\
						+\,&6s^5+5s^4+4s^3+3s^2+2s+1
	\end{split}
	\end{equation*}
	и соответствующий $h$-вектор $h_{GZ(1\,2^5\, 3)}=(1,2,3,4,5,6,7,6,5,4,3,1)$ c наибольшей компонентной $h_{5+1}=h_6=7$.
\end{example}

\subsection{Вычисление $f$- и $h$-многочленов многогранников $GZ(1\, 2\, 3^k)$ и $GZ(2^2\, 3^k)$ }\label{subsecGZ123k223k}
Теперь рассмотрим серию многогранников $GZ(1\, 2\, 3^k)$. В этом случае куб $C$ также может быть задан в координатах неравенствами $1 \le u_1 \le 2 \le u_2 \le 3$. Пусть барицентр $\widehat A$ некоторой грани куба $C$ имеет координаты $(u_1,u_2)$, тогда $\pi^{-1}(\widehat A)\simeq GZ(u_1 \ u_2\ 3^{k-1})$. К примеру, многогранник, комбинаторно эквивалентный полному прообразу барицентра двумерной грани, оказывается таким:
$$\pi^{-1}(1.5,2.5) \simeq GZ(1.5\ 2.5\ 3^{k-1} ) \simeq GZ(1\ 2\ 3^{k-1}).$$

Рекуррентное соотношение (\ref{main formular}) для $f$-многочлена многогранника $GZ(1\, 2\, 3^k)$ после приведения подобных слагаемых выглядит следующим образом:
\begin{equation} \label{GZ(123k)}
		f_{GZ(1\ 2\ 3^k)}(t)=f_{GZ(2^2\ 3^{k-1})}+(1+3t+t^2)\cdot f_{GZ(1\, 2\ 3^{k-1})}+(2+t)\cdot f_{GZ(1\ 3^k)}.
\end{equation}

В (\ref{GZ(123k)}) кроме многогранников нашей серии и симплексов Гельфанда--Цетлина появился многогранник из серии $GZ(2^2\ 3^k)$. Пользуясь формулой (\ref{main formular}), напишем и для многогранников этой серии рекуррентное соотношение. В этом случае куб $C$ -- это просто отрезок, он может быть задан в координатах неравенствами $2 \le u_1 \le 3$. Пусть барицентр $\widehat A$ некоторой грани куба $C$ имеет координату $u_1$, тогда $\pi^{-1}(\widehat A)\simeq GZ(2 \ u_1\ 3^{k-1})$. 

Иcкомое рекуррентное соотношение для  $f$-многочлена многогранника $GZ(2^2 3^k)$ имеет вид:
\begin{equation} \label{GZ(223k)}
f_{GZ(2^2\ 3^k)}(t)=f_{GZ(2^2\ 3^{k-1})}+ f_{GZ(2\ 3^k)} + t\cdot f_{GZ(1\, 2\ 3^{k-1})}.
\end{equation}

Соотношения (\ref{GZ(123k)}) и (\ref{GZ(223k)}) образуют систему: 
\begin{equation}\label{system} 
	\begin{cases}
	f_{GZ(1\ 2\ 3^k)}(t)= (1+3t+t^2)\cdot f_{GZ(1\, 2\ 3^{k-1})} + f_{GZ(2^2\ 3^{k-1})}+(2+t)\cdot f_{GZ(1\ 3^k)},\\
	f_{GZ(2^2\ 3^k)}(t)= t\cdot f_{GZ(1\, 2\ 3^{k-1})} + f_{GZ(2^2\ 3^{k-1})} + f_{GZ(2\ 3^k)}.
	\end{cases}
\end{equation}
Введём дополнительные обозначения: $d_k = f_{GZ(1\, 2\, 3^k)}$ , $b_k =f_{GZ(2^2\, 3^k)}$, $a_k=f_{GZ(1\, 3^k)}$,  $\mu_1 = 1+3t+t^2$, $\mu_2 = t$, тогда система (\ref{system}) примет вид:

\begin{equation*} 
	\begin{cases}
	d_k=\mu_1\cdot d_{k-1}+b_{k-1}+(2+t) a_k\\
	b_k=\mu_2\cdot d_{k-1}+b_{k-1}+a_k,
	\end{cases}
\end{equation*}
найдём её общее решение  в матричной форме:
\begin{equation*}
	\begin{pmatrix} 
	d_k\\
	b_k 
	\end{pmatrix} =	
	\begin{pmatrix}
	 \mu_1& 1\\
	 \mu_2& 1
	\end{pmatrix}^k 
	\begin{pmatrix}
	d_0\\
	b_0
	\end{pmatrix} +
	\sum_{j=1}^{k}
	\begin{pmatrix}
		 \mu_1& 1\\
		 \mu_2& 1
	\end{pmatrix}^{k-j}
		\begin{pmatrix}
			(2+t)a_j\\
			a_j
		\end{pmatrix}.
\end{equation*}
Так как $d_0=f_{GZ(1\, 2\, 3^0)}=2+t$ и $b_0=f_{GZ(2^2\, 3^0)}=1$, то общее решение системы (\ref{system}) в матричной форме имеет вид:
\begin{equation}
	\begin{pmatrix}
	f_{GZ(1\, 2\, 3^k)}(t)\\
	f_{GZ(2^2\, 3^k)}(t)
	\end{pmatrix} =
	M_t^k
	\begin{pmatrix}
	2+t\\
	1
	\end{pmatrix} + \sum_{j=1}^{k}
	M_t^{k-j} 
\begin{pmatrix}
(2+t)\cdot \frac{(1+t)^{j+1}-1}{t}\\
\frac{(1+t)^{j+1}-1}{t}
\end{pmatrix}, 	
\end{equation}
где
\[
M_t=\begin{pmatrix}
 (1+t)^2+t& 1\\
 t& 1
\end{pmatrix}. 
\]
Для дальнейших вычислений нам будет удобно перейти к $h$-многочленам многогранников $GZ(1\, 2\, 3^k)$ и $GZ(2^2\, 3^k)$
\begin{equation}\label{sysh}
	\begin{pmatrix}
	h_{GZ(1\, 2\, 3^k)}(s)\\
	h_{GZ(2^2\, 3^k)}(s)
	\end{pmatrix} =
	M_s^k
	\begin{pmatrix}
	s+1\\
	1
	\end{pmatrix} + \sum_{j=1}^{k}
	M_s^{k-j} 
\begin{pmatrix}
(s+1)\cdot \frac{s^{j+1}-1}{s-1}\\
\frac{s^{j+1}-1}{s-1}
\end{pmatrix}, 	
\end{equation}
где
\[
M_s=\begin{pmatrix}
 s^2+s-1& 1\\
 s-1& 1
\end{pmatrix}. 
\]
\begin{theorem}\label{hpolynom123k223k}
Пусть $\{\Phi_k\}$ последовательность,  заданная рекуррентным соотношением $\Phi_{k+1}=(s^2+s)\Phi_k-s^2\Phi_{k-1}$ с начальными условиями $\Phi_0=0$ и $\Phi_1=1$, тогда

A)формула для $h$-многочлена многогранника $GZ(1\, 2\, 3^k)$ имеет вид
\begin{equation}\label{h(123^k)}
h_{GZ(1\, 2\, 3^k)}(s)=\sum_{j=0}^{k}\frac{s^{j+2}-1}{s-1}\Phi_{k-j+1} ;
\end{equation}

Б) формула для $h$-многочлена многогранника $GZ(2^2\, 3^k)$ имеет вид
\begin{equation}\label{h(223^k)}
h_{GZ(2^2\,3^k)}(s)=\sum_{j=0}^{k}s^{j+2}\Phi_{k-j}+\frac{s^{k+1}-1}{s-1}.
\end{equation}
\end{theorem}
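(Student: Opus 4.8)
The plan is a direct computation starting from the closed form already obtained: substitute the matrix power (\ref{Mk}) into the vector identity (\ref{sysh}) and read off each coordinate. Write $P_j:=\frac{s^{j+1}-1}{s-1}=1+s+\dots+s^j$, so that $P_0=1$, $P_1=s+1$, and the right-hand side of (\ref{sysh}) becomes the single sum $\sum_{j=0}^{k}M_s^{\,k-j}\bigl((s+1)P_j,\,P_j\bigr)^{\top}$ (the $j=0$ term being the leading term $M_s^{k}(s+1,1)^{\top}$). Using the first row of (\ref{Mk}) and the identity $(\Phi_{m+1}-\Phi_m)(s+1)+\Phi_m=(s+1)\Phi_{m+1}-s\Phi_m$, the first coordinate is
\[
h_{GZ(1\,2\,3^k)}(s)=\sum_{j=0}^{k}\bigl[(s+1)\Phi_{k-j+1}-s\Phi_{k-j}\bigr]P_j,
\]
and, using the second row of (\ref{Mk}) together with $(s-1)(s+1)\Phi_m+\Phi_m-s^2\Phi_{m-1}=s^2(\Phi_m-\Phi_{m-1})$, the second coordinate is $h_{GZ(2^2\,3^k)}(s)=s^2\sum_{j=0}^{k}(\Phi_{k-j}-\Phi_{k-j-1})P_j$, where at $j=k$ the term $\Phi_{k-j-1}=\Phi_{-1}$ is interpreted via $\Phi_{-1}=-1/s^2$.

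The second step is to regroup each of these sums so that it collapses to the claimed answer. For part A) I would split into the two halves, reindex $j\mapsto j-1$ in the $\Phi_{k-j}$-part (the boundary term vanishes because $\Phi_0=0$), and collect the coefficient of $\Phi_{k-j+1}$; it equals $(s+1)P_j-sP_{j-1}$ for $j\ge 1$ and $P_1$ for $j=0$. The elementary identities $sP_{j-1}=P_j-1$ and $P_{j+1}=sP_j+1$ give $(s+1)P_j-sP_{j-1}=P_{j+1}$, so the sum becomes $\sum_{j=0}^{k}P_{j+1}\Phi_{k-j+1}=\sum_{j=0}^{k}\frac{s^{j+2}-1}{s-1}\Phi_{k-j+1}$, which is (\ref{h(123^k)}). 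For part Б) the same reindexing (now the boundary term is governed by $\Phi_{-1}=-1/s^2$, which is precisely what produces the extra summand $\frac{s^{k+1}-1}{s-1}=P_k$) together with $P_j-P_{j-1}=s^j$ turns $s^2\sum_{j=0}^{k}(\Phi_{k-j}-\Phi_{k-j-1})P_j$ into $s^2\Phi_k+\sum_{j=1}^{k}s^{j+2}\Phi_{k-j}+P_k=\sum_{j=0}^{k}s^{j+2}\Phi_{k-j}+\frac{s^{k+1}-1}{s-1}$, which is (\ref{h(223^k)}).

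Nothing deep happens here — all the structural work was already done in deriving (\ref{sysh}) and (\ref{Mk}) — so the only thing to watch is the bookkeeping: shifting the summation index without dropping or double-counting the $j=k$ boundary term, and keeping straight the three one-line geometric-series identities $sP_{j-1}=P_j-1$, $P_{j+1}=sP_j+1$, $P_j-P_{j-1}=s^j$. As an alternative one could bypass the matrix power altogether and prove both formulas simultaneously by induction on $k$, checking that the right-hand sides of (\ref{h(123^k)}) and (\ref{h(223^k)}) satisfy the $h$-polynomial forms of the recurrences (\ref{GZ(123k)}) and (\ref{GZ(223k)}) with base cases $h_{GZ(1\,2)}(s)=s+1$ and $h_{GZ(2^2)}(s)=1$; that route replaces the manipulation of $M_s^k$ by a direct appeal to the defining recurrence (\ref{Phi}) for $\{\Phi_k\}$, but is of comparable length and difficulty.
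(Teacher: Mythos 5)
Your proposal is correct and follows exactly the paper's route: substituting $M_s^k$ from (\ref{Mk}) into (\ref{sysh}) yields precisely the paper's intermediate expressions (\ref{hfool(123^k)}) and (\ref{hfool(223^k)}), and your reindexing with the identities $(s+1)P_j-sP_{j-1}=P_{j+1}$ and $P_j-P_{j-1}=s^j$ (including the boundary terms governed by $\Phi_0=0$ and $\Phi_{-1}=-1/s^2$) carries out in detail the step the paper summarizes as ``привести подобные слагаемые''. All computations check out.
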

\begin{proof}
Приведём матрицу $M_s$ к фробениусовой нормальной форме, это упростит нам возведение в степень
\[
M_s=
\begin{pmatrix}
0& 1\\
1& 1
\end{pmatrix}
\begin{pmatrix}
0& -s^2\\
1& s^2+s
\end{pmatrix}
\begin{pmatrix}
-1& 1\\
1& 0
\end{pmatrix}. 
\]
Несложно доказать по индукции, что 
\begin{equation}\label{FKLk}
\begin{pmatrix}
0&-s^2\\
1& s^2+s
\end{pmatrix}^k=
\begin{pmatrix}
-s^2\cdot \Phi_{k-1}& -s^2\cdot \Phi_k\\
\Phi_k&           \Phi_{k+1}
\end{pmatrix}.
\end{equation}
 Положим также $\Phi_{-1}=-\frac{1}{s^2}$, что обеспечит нам корректное возведение в нулевую степень. Вычислив производящую функцию 
\begin{equation}\label{Phi(z)}
\Phi(z)=\sum_{k=0}^{\infty}\Phi_kz^k=\frac{z}{s^2z^2-(s^2+s)z+1},
\end{equation}
  мы находим общее решение рекуррентного соотношения $\Phi_{k+1}=(s^2+s)\Phi_k-s^2\Phi_{k-1}$ при $k\ge 0$
\begin{equation}\label{explicit}
\Phi_k=\frac{1}{\sqrt{s^2+2s-3}}\cdot\Bigg(\bigg(\frac{s+1+\sqrt{s^2+2s-3}}{2}\bigg)^k-\bigg(\frac{s+1-\sqrt{s^2+2s-3}}{2}\bigg)^k\Bigg).
\end{equation}
Теперь вернёмся к возведению в степень матрицы $M_s$. Вычисления с использованием формулы (\ref{FKLk}) дают следующий результат
\begin{equation}\label{Mk}
M_s^k=
\begin{pmatrix}
0& 1\\
1& 1
\end{pmatrix}
\begin{pmatrix}
-s^2 \Phi_{k-1}& -s^2 \Phi_k\\
\Phi_k              & \Phi_{k+1}
\end{pmatrix}
\begin{pmatrix}
-1& 1\\
1& 0
\end{pmatrix}=
\begin{pmatrix}
\Phi_{k+1}-\Phi_{k}& \Phi_k \\
(s-1)\Phi_k        & \Phi_k-s^2\Phi_{k-1}
\end{pmatrix}. 
\end{equation}

Подставим (\ref{Mk}) в (\ref{sysh}). Непосредственно после подстановки мы получим
\begin{equation}\label{hfool(123^k)}
h_{GZ(1\, 2\, 3^k)}(s)=\sum_{j=   0}^{k}\big((s+1)\Phi_{k-j+1}-s\Phi_{k-j}\big)\frac{s^{j+1}-1}{s-1},
\end{equation}
\begin{equation}\label{hfool(223^k)}
h_{GZ(2^2\,3^k)}(s)=s^2\sum_{j=0}^{k}\big(\Phi_{k-j}-\Phi_{k-j-1}\big)\frac{s^{j+1}-1}{s-1}.
\end{equation}
Далее остаётся только привести подобные слагаемые.
Заметим в завершении доказательства, что мы доопределили $\Phi_{-1}=\frac{1}{A}=-\frac{1}{s^2}$, выйдя из кольца многочленов в поле рациональных функций, но $\Phi_{-1}$ появляется только в формуле (\ref{hfool(223^k)}) и совершенно не мешает $h_{GZ(2^2\, 3^k)}(s)$ быть многочленом.
\end{proof}

\begin{example}
Положим $k=3$ и найдём $h$-многочлен и $h$-вектор для многогранников $GZ(1\,2\, 3^3)$ и $GZ(2\, 2\, 3^3)$. При заданном значении $k$ формула (\ref{h(123^k)}) принимает вид:
\begin{equation*}
\begin{split}
h_{GZ(1\, 2\,3^3)}(s)=\sum_{j=0}^{3}\frac{s^{j+2}-1}{s-1}\Phi_{4-j}
		   =(s+1)\Phi_4&+(s^2+s+1)\Phi_3+(s^3+s^2+s+1)\Phi_2+\\&
		   +(s^4+s^3+s^2+s+1)\Phi_1.
\end{split}
\end{equation*}
Вычислим необходимые нам значения $\Phi_k$:
\begin{equation*}
\begin{split}
&\Phi_0=0,\ \Phi_1=1,\\
&\Phi_2=(s^2+s)\Phi_1-s^2\Phi_0=s^2+s,\\
&\Phi_3=(s^2+s)\Phi_2-s^2\Phi_1=s^4+2s^3,\\
&\Phi_4=(s^2+s)\Phi_3-s^2\Phi_2=s^6+3s^5+s^4-s^3.
\end{split}
\end{equation*}
Итак, 
\begin{equation*}
\begin{split}
h_{GZ(1\, 2\,3^3)}(s)&=(s+1)(s^6+3s^5+s^4-s^3)+(s^2+s+1)(s^4+2s^3)+\\
           &+(s^3+s^2+s+1)(s^2+s)+s^4+s^3+s^2+s+1=\\
           &=s^7+5s^6+8s^5+6s^4+4s^3+3s^2+2s+1
\end{split}
\end{equation*}
и соответствующий $h$-вектор $h_{GZ(1\, 2\,3^3)}=(1,2,3,4,6,8,5,1)$. При заданном значении $k$ формула (\ref{h(223^k)}) принимает вид:
\begin{equation*}
h_{GZ(2^2\,3^3)}(s)=\sum_{j=0}^{3}s^{j+2}\Phi_{3-j}+\frac{s^{4}-1}{s-1}=s^2\Phi_3+s^3\Phi_2+s^4\Phi_1+s^5\Phi_0+s^3+s^2+s+1.
\end{equation*}
Итак, \begin{equation*}
 \begin{split}
 h_{GZ(2^2\,3^3)}(s)&=s^2(s^4+2s^3)+s^3(s^2+s)+s^4+s^3+s^2+s+1=\\
                    &=s^4+2s^3+s^2+s+1
 \end{split}
 \end{equation*}
 и соответствующий $h$-вектор $h_{GZ(2^2\,3^3)}=(1,1,1,2,1)$.
\end{example}
\begin{corollary}\label{gfh123k223k}
А) Замкнутая формула для производящей функций последовательности $\big(h_{GZ(1\, 2\, 3^k)}(s)\big)_{k=0}^{\infty}$ имеет вид
\begin{equation}\label{gfh(123k)}
\begin{split}
H_{GZ(1\, 2\, 3^k)}(s,z)&=\sum_{k=0}^{\infty}h_{GZ(1\, 2\, 3^k)}(s) z^k=\frac{-sz+s+1}{(1-z)(1-sz)(s^2z^2-(s^2+s)z+1)};
\end{split}
\end{equation}
Б) Замкнутая формула для производящей функции последовательности
$\big(h_{GZ(2^2\,3^k)}(s)\big)_{k=0}^{\infty}$ имеет вид
\begin{equation}\label{gfh(223k)}
\begin{split}
H_{GZ(2^2\,3^k)}(s,z)&=\sum_{k=0}^{\infty}h_{GZ(2^2\,3^k)}(s) z^k=\frac{1}{(1-z)(s^2z^2-(s^2+s)z+1)}.
\end{split}
\end{equation}
\end{corollary}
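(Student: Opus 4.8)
The plan is to read the two formulas of Theorem~\ref{hpolynom123k223k} as Cauchy convolutions in the index $k$ and then pass to generating functions, using that the generating function of a convolution of two sequences is the product of their generating functions, together with the already-computed generating function $\Phi(z)=\frac{z}{s^2z^2-(s^2+s)z+1}$ from~(\ref{Phi(z)}).

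For part A), formula~(\ref{h(123^k)}) writes $h_{GZ(1\,2\,3^k)}(s)=\sum_{j=0}^{k}a_j\,\Phi_{k-j+1}$ with $a_j=\frac{s^{j+2}-1}{s-1}$; equivalently, $h_{GZ(1\,2\,3^k)}(s)$ is the coefficient of $z^k$ in $A(z)\cdot\bigl(\sum_{m\ge 0}\Phi_{m+1}z^m\bigr)$, where $A(z)=\sum_{j\ge 0}a_j z^j$. First I would evaluate $A(z)=\frac{1}{s-1}\bigl(\frac{s^2}{1-sz}-\frac{1}{1-z}\bigr)$, which, after clearing denominators, collapses to $\frac{-sz+s+1}{(1-z)(1-sz)}$. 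Next, since $\Phi_0=0$, the shifted series $\sum_{m\ge 0}\Phi_{m+1}z^m$ equals $\Phi(z)/z=\frac{1}{s^2z^2-(s^2+s)z+1}$ by~(\ref{Phi(z)}). Multiplying the two factors produces exactly~(\ref{gfh(123k)}).

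For part B), formula~(\ref{h(223^k)}) has two summands. The first, $\sum_{j=0}^{k}s^{j+2}\Phi_{k-j}$, is again a convolution, so its contribution to the generating function is $\bigl(\sum_{j\ge 0}s^{j+2}z^j\bigr)\Phi(z)=\frac{s^2}{1-sz}\cdot\frac{z}{s^2z^2-(s^2+s)z+1}$. The second, $\frac{s^{k+1}-1}{s-1}$, contributes $\frac{1}{s-1}\bigl(\frac{s}{1-sz}-\frac{1}{1-z}\bigr)=\frac{1}{(1-z)(1-sz)}$. Putting both over the common denominator $(1-z)(1-sz)\bigl(s^2z^2-(s^2+s)z+1\bigr)$, the numerator becomes $s^2z(1-z)+\bigl(s^2z^2-(s^2+s)z+1\bigr)$; the $\pm s^2z^2$ terms cancel, leaving $-sz+1$, which is~(\ref{gfh(223k)}).

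The only real obstacle is the arithmetic bookkeeping in these partial-fraction-type simplifications --- in particular verifying that the quadratic-in-$z$ contributions cancel in part B) and that the numerator in part A) telescopes to $-sz+s+1$ --- but this is short and mechanical; no new idea is needed beyond the convolution observation and the expression~(\ref{Phi(z)}) for $\Phi(z)$ established earlier. One should also note in passing that everything takes place with formal power series in $z$ over the field $\Q(s)$, so the presence of the rational (non-polynomial) value $\Phi_{-1}=-\frac1{s^2}$ plays no role here.
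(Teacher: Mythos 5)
Your proposal is correct and follows essentially the same route as the paper: both identify the sums in Theorem~\ref{hpolynom123k223k} as Cauchy convolutions and multiply the generating function $\frac{-sz+s+1}{(1-z)(1-sz)}$ (resp.\ add $\frac{s^2}{1-sz}\Phi(z)$ and $\frac{1}{(1-z)(1-sz)}$) with $\Phi(z)/z$ from~(\ref{Phi(z)}); the paper merely presents the identical computation in the reverse direction, expanding the closed form back into the convolution. Your arithmetic (the collapse to $-sz+s+1$ in A and the cancellation of the $s^2z^2$ terms in B) checks out.
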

\begin{proof}
Правая часть формулы (\ref{gfh(123k)}) получается раскрытием скобок и приведением подобных слагаемых в выражении
\begin{equation*}
\frac{1}{z}\bigg(\frac{1}{1-z}\cdot\frac{1}{1-sz}-1\bigg)\cdot \frac{\Phi(z)}{z},
\end{equation*}
  где функция $\Phi(z)$ была определена ранее формулой (\ref{Phi(z)}). Последующие выкладки доказывают утверждение А:
\begin{equation*}
\begin{split}
&\frac{1}{z}\bigg(\frac{1}{1-z}\cdot\frac{1}{1-sz}-1\bigg)\cdot \frac{\Phi(z)}{z}
=\frac{1}{z}\bigg(\sum_{j=0}^{\infty}\frac{s^{j+1}-1}{s-1}z^j-1\bigg)\cdot\frac{1}{z}\bigg(\sum_{k=0}^{\infty}\Phi_kz^k\bigg)=\\
&=\bigg(\sum_{j=0}^{\infty}\frac{s^{j+2}-1}{s-1}z^j\bigg)\cdot\bigg(\sum_{k=0}^{\infty}\Phi_{k+1}z^k\bigg)=
\sum_{k=0}^{\infty}\bigg(\sum_{j=0}^{k}\frac{s^{j+2}-1}{s-1}\cdot\Phi_{k-j+1}\bigg)z^k=\\
=&[\text{по формуле}(\ref{h(123^k)})]=\sum_{k=0}^{\infty}h_{GZ(1 2 3^k)}(s)z^k.
\end{split}
\end{equation*}
 Правая часть формулы (\ref{gfh(223k)}) получается раскрытием скобок и приведением подобных слагаемых в выражении

\begin{equation*}
\frac{s^2}{1-sz}\cdot \Phi(z)+\frac{1}{1-z}\cdot \frac{1}{1-sz},
\end{equation*}
где функция $\Phi(z)$ определена формулой (\ref{Phi(z)}). Последующие выкладки доказывают утверждение Б:
\begin{equation*}
\begin{split}
&\frac{s^2}{1-sz}\cdot \Phi(z)+\frac{1}{1-z}\cdot \frac{1}{1-sz}=\bigg(\sum_{j=0}^{\infty}s^{j+2}z^j\bigg)\cdot \bigg(\sum_{k=0}^{\infty}\Phi_k z^k \bigg)+\sum_{k=0}^{\infty}\frac{s^{k+1}+1}{s-1}z^k=\\
&=\sum_{k=0}^{\infty}\bigg(\sum_{j=0}^{k}s^{j+2}\cdot\Phi_{k-j}\bigg)z^k +\sum_{k=0}^{\infty}\frac{s^{k+1}-1}{s-1}z^k=[\text{по формуле}(\ref{h(223^k)})]=\sum_{k=0}^{\infty}h_{GZ(2^23^k)}(s).
\end{split}
\end{equation*}

\end{proof}
\section{Благодарности}Я благодарна моему научному руководителю В. А. Тиморину за постановку задачи и  внимание к работе на всех её этапах, М. Н. Вялому, Н. П. Долбилину и С. П. Тарасову за полезные обсуждения и замечания. Я признательна неизвестному рецензенту за внимательное изучение рукописи, исправление неточностей и ценные комментарии. Также я очень благодарна моему мужу Сереже Мелихову за поддержку и веру в меня.

\end{document}